\documentclass[12pt,amscd,amssymb]{amsart}																																					 

\usepackage[centertags]{amsmath}
\usepackage{amsmath}
\usepackage{mathrsfs}
\usepackage{latexsym}
\usepackage{amsthm}
\usepackage{amsfonts}
\usepackage{amssymb}
\usepackage{stmaryrd}

\usepackage{xspace}
\usepackage{verbatim}
\usepackage{enumerate}
\usepackage{indentfirst}

\theoremstyle{plain}
\DeclareMathOperator{\soc}{Soc}\DeclareMathOperator{\cdim}{cdim} \DeclareMathOperator{\alt}{Alt}
\DeclareMathOperator{\aut}{Aut}
\DeclareMathOperator{\defi}{d}

\newtheorem{theorem}{Theorem}[section]
\newtheorem{lemma}[theorem]{Lemma}
\newtheorem{fact}[theorem]{Fact}
\newtheorem*{theorem*}{Theorem}
\newtheorem*{maintheorem*}{Main Theorem}
\newtheorem*{lemma*}{Lemma}

\newtheorem{corollary}[theorem]{Corollary}
\newtheorem*{intconj*}{Intermediate Conjecture}
\newtheorem{prop}[theorem]{Proposition}
\newtheorem*{prop*}{Proposition}
\newtheorem*{conj*}{Principal Conjecture}
\newtheorem*{thm3.1*}{Theorem 3.1}
\newtheorem*{prop2.11*}{Proposition 2.11}

\theoremstyle{definition}
\newtheorem{definition}[theorem]{Definition}
\newtheorem{remark}[theorem]{Remark}
\newtheorem*{remark*}{Remark}
\newtheorem{case}{Case}

\newtheorem*{claim*}{Claim}

\newcommand{\los}{\L o\'{s}'s Theorem}
\sloppy
\begin{document}

\title[Pseudofinite groups in simple groups of fMr]{Pseudofinite groups as fixed points in simple groups of finite Morley rank}
\author[P. U\u{G}URLU]{P\i nar U\u{g}urlu$^{*,\dagger}$}
\thanks{$^*$Supported by T\"{U}B\.ITAK}
\thanks{$\dagger$Research partially supported by the Marie Curie Early Stage Training Network MATHLOGAPS (MEST-CT-2004- 504029)}
\address{Istanbul Bilgi  University \\
Yahya K\"{o}pr\"{u}s\"{u} Sok. No: 1
Dolapdere \\34440\\
Istanbul\\
Turkey}
 \email{pinar.ugurlu@bilgi.edu.tr}

\maketitle
\begin{abstract}
We prove that if the group of fixed points of a generic automorphism of a simple group of finite Morley rank is pseudofinite, then this group is an extension of a (twisted) Chevalley group over a pseudofinite field. On the way to obtain this result, we classify non-abelian definably simple pseudofinite groups of finite centralizer dimension (using the ideas of John S. Wilson \cite{W}).
\end{abstract}

\section*{Introduction}
A group of finite Morley rank is a group equipped with a rank function from the set of non-empty definable subsets to the non-negative integers. This rank function imitates the Zariski dimension in algebraic geometry (for details see \cite{bn}). Algebraic groups over algebraically closed fields are examples of groups of finite Morley rank in which case the Morley rank coincides with the Zariski dimension. Actually, the only known infinite simple groups of finite Morley rank are algebraic groups over algebraically closed fields. The \emph{Algebraicity Conjecture} (stated independently by Gregory Cherlin and Boris Zilber in the 1970's) says that any infinite simple group of finite Morley rank is an algebraic group over an algebraically closed field. Although this conjecture is still open, important results have been obtained by adapting and generalizing ideas from the classification of finite simple groups. This approach was suggested by Alexandre Borovik. For a detailed information about the Borovik program, we refer the reader to  \cite{abc}. The current status of the conjecture can be summarized as follows:

  In the theory of groups of finite Morley rank, the  Sylow $2$-subgroups are known to be conjugate and each of them  is a finite extension of the central product $U * T$ where $U$ is a definable connected $2$-group of bounded exponent and $T$ is a divisible abelian $2$-group.
Therefore, depending on the structure of the connected components of Sylow $2$-subgroups, the Algebraicity Conjecture breaks up into four cases.
\begin{enumerate}
	\item Even type: $U\neq 1$ and $T=1$  (identified with Chevalley groups over algebraically closed fields of characteristic $2$).
	\item Odd type: $U =1$ and $T\neq 1$ (structural results  are obtained on potential non-algebraic odd type groups).
	\item Mixed type: $U\neq 1$ and $T\neq 1$ (no such groups exist).
	\item Degenerate type: $U= 1$ and $T=1$ (difficult  case).
\end{enumerate}

A new approach to the Algebraicity Conjecture originates from results and ideas of Ehud Hrushovski in \cite{hru}, where some classes of structures, including simple groups of finite Morley rank, with generic automorphisms are considered. Hrushovski proved that the set of fixed points of a generic automorphism is a PAC structure with a definable measure (for details, see \cite{hru}). In the particular case of simple groups of finite Morley rank, the ultimate aim is to prove that the group of fixed points is pseudofinite, that is, an infinite model of the theory of finite groups.  However, unlike pseudofinite fields (which are characterized by James Ax in \cite{ax}), it is not known how to characterize pseudofinite groups.

 As it is mentioned by Hrushovski in \cite{hru}, the Algebraicity Conjecture implies the following conjecture.

 \begin{conj*}
Let $G$ be an infinite simple group of finite Morley rank with a generic automorphism $\alpha$. Then, the group of fixed points of $\alpha$  is pseudofinite.
\end{conj*}

This paper is a first step to construct a bridge between Algebraicity Conjecture and  Principle Conjecture from the other direction. More precisely, we aim to prove that the Principal Conjecture implies the Algebraicity Conjecture. This implication can be stated in the following form.
\begin{intconj*}
Let $G$ be an infinite simple group of finite Morley rank with a generic automorphism $\alpha$.  Assume that the group of fixed points of $\alpha$ is pseudofinite. Then, $G$ is isomorphic to a Chevalley group over an algebraically closed field.
\end{intconj*}

 In this paper,  we do not use the full strength of the genericity assumption on the automorphism $\alpha$. We can state the main result  as follows.
\begin{thm3.1*}
 Let $G$ be an infinite simple group of finite Morley rank and $\alpha$ be an automorphism of $G$ such that the definable hull of $C_G(\alpha)$ is $G$. If $C_G(\alpha)$ is pseudofinite, then there is a definable (in $C_G(\alpha)$) normal subgroup $S$ of $C_G(\alpha)$ such that  $S$ is isomorphic to a (twisted) Chevalley  group over a pseudofinite field and $C_G(\alpha)$ embeds in   $\aut(S)$.
\end{thm3.1*}

Here, $C_G(\alpha)$ denotes the group of fixed points of $\alpha$ in $G$. Note that the assumption on the definable hull of $C_G(\alpha)$ is satisfied by generic automorphisms of groups of finite Morley rank. Moreover, we will observe in the last section that under the assumptions of this theorem, degenerate types groups can not exist.

The first result we obtain on the way to prove Theorem~\ref{Theorem2} is the classification of non-abelian definably simple pseudofinite groups
of finite centralizer dimension.  For this classification, we first observe that  a sizeable part of  Wilson's classification proof for simple pseudofinite groups in \cite{W} works for non-abelian definably simple pseudofinite groups.  Then, with the help of  the finite centralizer dimension property, we show that such groups are elementarily equivalent to  (twisted) Chevalley groups over  pseudofinite fields.  Moreover, thanks to the results obtained by Mark Ryten  \cite{ryten}, we can replace the elementary equivalence by an isomorphism and we obtain the following result.
\begin{prop2.11*}  Every non-abelian definably simple  pseudofinite group of finite centralizer dimension is isomorphic to a (twisted) Chevalley  group over a pseudofinite field.
\end{prop2.11*}

Note that in some cases a non-abelian definably simple group $G$ is automatically  simple. For example, this holds if the theory of $G$ is supersimple \cite[Proposition 5.4.10]{wagner}. However, in our case we do not know whether the theory of a pseudofinite group of finite centralizer dimension is supersimple or not.

The structure of this paper can be outlined as follows.

In the first section, we give necessary background information about basic model theoretic concepts, ultraproducts, pseudofinite groups, groups of finite Morley rank, and we explain our terminology and notation.

   In the second section, firstly, we analyze the structure of abelian definably simple
pseudofinite groups. Then, we  classify non-abelian definably simple pseudofinite groups of finite centralizer dimension.

  In the third section, we prove the main result (Theorem~\ref{Theorem2}).

\section{Preliminaries}
This section covers some background material which will be necessary throughout this paper. We assume that the reader is familiar with the basic notions in model theory such as structure, language,  formula and theory.

Two structures $\mathfrak{M}$ and $\mathfrak{N}$, in a common language
$\mathcal{L}$, are \emph{elementarily equivalent} if they satisfy the
same $\mathcal{L}$-sentences and we write $\mathfrak{M}\equiv \mathfrak{N}$. A theory
is \emph{complete} if all of its models are elementarily equivalent.

A \emph{definable set} in a structure $\mathfrak{M}$ is a subset $X
\subseteq M^n$ ($M$ denotes the underlying set of the structure $\mathfrak{M}
$) which is the set of realizations of  a first order formula in the language of the structure $\mathfrak{M}$.

We emphasize that, throughout this paper, we stay within the borders of first
order logic and when we say definable, we mean definable possibly with parameters. We consider groups (resp. fields) as structures in the pure group (resp. field) language.





\subsection{Ultraproducts and pseudofinite groups}

We give some background information about ultraproducts and pseudofinite groups and list some related facts. Proofs of these facts can be found in the indicated references.  We refer the reader to \cite{bells} and \cite{changk} for a detailed information about ultraproducts.

An \emph{ultrafilter} $\mathcal{U}$ on a non-empty set $I$ is a proper subset of the power set of $I$ such that $\mathcal{U}$ is  closed under finite intersections and taking supersets and  for any subset $A$ of $I$, we have $A\in \mathcal{U}$ if and only if $I\backslash A \notin \mathcal{U}$ . For any $i\in I$, the set $\left\{ X \subseteq I  ~|~ i \in X \right\}$ forms an ultrafilter which is called the \emph{principal ultrafilter generated by $i$} and any ultrafilter containing a finite set turns out to be a principal ultrafilter. Note that the existence of non-principal ultrafilters  is guaranteed by Zorn's lemma.

Now let $\prod_{i\in I} X_{i}$ denote the cartesian product of non-empty structures in a common language $\mathcal{L}$ and let $\mathcal{U}$ be an ultrafilter on $I$. Considering the elements of $\prod_{i\in I} X_{i}$ as functions from $I$ to $\bigcup_{i\in I} X_i$, define a relation on $\prod_{i\in I} X_{i}$ as follows. \[ x \sim_\mathcal{U} y \ \text{ if and only if }  \ \left\{  i \in I ~|~  x(i)= y(i) \right\} \in \mathcal{U}.\]
It is routine to check that $\sim_\mathcal{U}$ is an equivalence relation. The quotient of the cartesian product with respect to this equivalence relation is called the \emph{ultraproduct} and we denote it by $\prod_{i\in I} X_{i}/\mathcal{U}$. When all structures are the same, then this ultraproduct is called \emph{ultrapower}. We denote the equivalence class of  $x \in \prod_{i\in I} X_{i}$ with respect to $\sim_\mathcal{U}$ by  $[x]_\mathcal{U}$. An ultraproduct of $L$-structures has a natural (coordinatewise) $L$-structure.

The importance of the ultraproduct construction is expressed in \L o\'{s}'s Theorem \cite{los1} which states that a first order formula is satisfied in the ultraproduct if and only if it is satisfied in the structures indexed by a set belonging to the ultrafilter. In particular, first order properties of the ultraproduct  are determined by the first order properties of the structures in the ultraproduct together with the choice of the ultrafilter.

It can be observed that if $\mathcal{U}$ is an ultrafilter on a set $I$ which is a disjoint
union of finitely many subsets $I_1,\ldots, I_m$, then exactly one of $I_j$
is in $\mathcal{U}$. Moreover, whenever $J\in \mathcal{U}$ then, the set \ $\mathcal{U}_J = \left\{ X \cap J \ ~|~ \
X \in \mathcal{U} \right\}$ forms an ultrafilter on $J$ and we have
$\prod_{i \in I} X_i/\mathcal{U} \cong \prod_{j \in J} X_j/\mathcal{U}_J$. In particular, ultraproducts over principal
ultrafilters are isomorphic to one of the structures in the cartesian
product. Therefore, throughout this paper,  the ultrafilters in concern will always be non-principal.

\begin{remark}\label{wlog}
 Throughout the text,  we will use  the well-known properties mentioned in the previous paragraph. Moreover,  whenever we have $\prod_{i \in I} X_i/\mathcal{U} \cong \prod_{j \in J} X_j/\mathcal{U}_J$,  we will abuse the language, and keep writing $\mathcal{U}$ and $I$  for the ultrafilter and the index set.
\end{remark}

Ulrich Felgner  introduced in \cite{felgner} pseudofinite groups
as infinite models of the theory of finite groups in accordance with Ax's
characterization of pseudofinite fields \cite{ax}. By a suitable choice of an ultrafilter, it can be shown that any pseudofinite group is elementarily equivalent
to a non-principal ultraproduct of finite groups (see \cite{wilson}). As we will see in Section~\ref{simple}, the additive group of rational numbers, $(\mathbb{Q}, +)$, is a
pseudofinite group. However, the additive group of integers, $(\mathbb{Z}, +)$, is not a pseudofinite group, because the first order statement
$$\text{the map $x\mapsto x+x$ is one-to-one if and only if it is onto}$$
does not hold in $(\mathbb{Z},+)$ while it holds in every finite group.

Throughout this paper, a Chevalley group over an arbitrary field $K$  will be denoted by $X(K)$ where $X$ stands for one of the symbols $A_n (n \geqslant 1), B_n (n \geqslant 2), C_n (n \geqslant 3), D_n (n \geqslant 4), E_6, E_7, E_8, F_4, G_2$. These symbols come from the classification of finite dimensional complex simple Lie algebras. Chevalley constructed Chevalley groups over arbitrary fields associated to these symbols.  We will not go into details about the construction of neither these groups nor the twisted versions of these groups which can be constructed when the field $K$ has some additional properties (see \cite{s2}). We denote twisted Chevalley groups by $X(K)$ as well, where $X$ denotes one of the symbols  ${^2A_n} (n \geqslant 2), {^2D_n} (n \geqslant 4), {^3D_4}, {^2E_6}, {^2B_2}, {^2F_4}, {^2G_2}$.  It is known that (twisted) Chevalley  groups over arbitrary fields are simple as abstract groups except for ${^2A_2}(\mathbb F_4), {^2B_2}(\mathbb F_2), {^2F_4}(\mathbb F_2)$ and  ${^2G_2}(\mathbb F_3)$.  We refer the reader to \cite{carter} and \cite{s2} for details about these groups.

The following result of Fran\c{c}oise Point  shows that any infinite Chevalley group over an ultraproduct of finite fields is an example of a pseudofinite group.

\begin{fact}[Point \cite{P}] \label{Fact1} Let $\left\{ X(F_i)~|~ i\in I \right\}$ be a family of  (twisted) Chevalley groups of the same type $X$ over finite or pseudofinite fields, and let $\mathcal{U}$ be a non-principal ultrafilter on the set $I$. Then \[ \prod_{i \in I} X(F_i)/\mathcal{U} \cong X ( \prod_{i \in I} F_i/ \mathcal{U} ). \]
\end{fact}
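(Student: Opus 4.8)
The plan is to exploit the fact that a (twisted) Chevalley group of a fixed type $X$ is built from its base field by a single, field-independent recipe, and then to let \los\ transport that recipe through the ultraproduct. Concretely, I would first recall Chevalley's construction: for the root system $\Phi$ attached to $X$ one fixes a Chevalley basis of the corresponding complex simple Lie algebra, obtaining integer structure constants $c_{ij\alpha\beta}\in\mathbb{Z}$ that do not depend on the field. The group $X(K)$ is then generated by the root elements $x_\alpha(t)$, $\alpha\in\Phi$, $t\in K$, subject to the Chevalley commutator relations, and via a fixed faithful representation it is realised as the $K$-points $\mathbf{G}(K)$ of an affine group scheme $\mathbf{G}$ defined over $\mathbb{Z}$. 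The upshot I want to extract is that there is one interpretation scheme $\Gamma$, written in the language of fields with integer constants only, such that $\Gamma(K)\cong X(K)$ for every field $K$: the domain of $\Gamma(K)$ is cut out inside some $K^{N}$ by finitely many polynomial equations over $\mathbb{Z}$, and multiplication and inversion are given by fixed $\mathbb{Z}$-polynomials.

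Granting this uniform interpretability, the untwisted case is immediate from the general principle that interpretations commute with ultraproducts. Writing $F=\prod_{i\in I}F_i/\mathcal{U}$, the domain of $\Gamma(F)$ consists of the classes $[a]_\mathcal{U}$ with $F\models\varphi_{\mathrm{dom}}([a])$; by \los\ this is exactly the set of classes with $\{\,i:F_i\models\varphi_{\mathrm{dom}}(a(i))\,\}\in\mathcal{U}$, which is in natural bijection with $\prod_{i}\Gamma(F_i)/\mathcal{U}$. The same application of \los\ to the formulas defining multiplication and inversion shows this bijection is a group isomorphism, so $\Gamma(F)\cong\prod_i\Gamma(F_i)/\mathcal{U}$. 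Combining with $\Gamma(F_i)\cong X(F_i)$ and $\Gamma(F)\cong X(F)$ yields $X(\prod_i F_i/\mathcal{U})\cong\prod_i X(F_i)/\mathcal{U}$. I would note here that having all $F_i$ of the same type $X$ is what keeps the scheme $\Gamma$ fixed across the family, and that varying characteristics cause no trouble because the structure constants are genuine integers whose reductions are handled coordinatewise by \los.

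For the twisted groups the recipe needs one extra piece of data, since $X(K)$ is now the subgroup of fixed points of a twisting automorphism $\sigma$ of the ambient untwisted group over the relevant extension, and $\sigma$ is assembled from a graph automorphism of $\Phi$ together with a field automorphism $\phi$ of $K$. I would therefore run the whole argument in the language of difference fields, viewing each base as a pair $(F_i,\phi_i)$: the twisted group is uniformly interpretable in $(F_i,\phi_i)$ by the same $\mathbb{Z}$-polynomial data, now allowed to apply $\phi$, and $\prod_i(F_i,\phi_i)/\mathcal{U}=\bigl(\prod_i F_i/\mathcal{U},\,\prod_i\phi_i/\mathcal{U}\bigr)$ carries the induced automorphism, which is precisely the twisting automorphism defining $X(\prod_i F_i/\mathcal{U})$. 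With this reading, the interpretation-commutes-with-ultraproducts argument of the previous paragraph goes through verbatim.

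The main obstacle I anticipate lies entirely in the twisted case, and specifically in the very twisted Suzuki and Ree families ${}^2B_2,{}^2G_2,{}^2F_4$, where the field automorphism $\phi$ is not polynomial over $\mathbb{Z}$ but satisfies $\phi^2=\mathrm{Frob}$. Here one must genuinely justify that the group is still captured by a field-independent scheme once $\phi$ is available as a distinguished operation, and, more delicately, that the induced automorphism on the ultraproduct field is the correct twisting automorphism, so that the symbol $X(\prod_i F_i/\mathcal{U})$ is even well defined and matches the fixed-point construction. Pinning down this compatibility, together with checking that the chosen isogeny type (e.g. universal versus adjoint) is preserved and that centres behave well under the quotient in Steinberg's presentation, is where the careful work sits; the untwisted skeleton, by contrast, is a formal consequence of \los.
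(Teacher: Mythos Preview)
The paper does not prove this statement: it is stated as a \emph{Fact} attributed to Point \cite{P} and is used as a black box, so there is no proof in the paper to compare your proposal against.

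That said, your sketch is essentially the argument Point carries out. The untwisted case is exactly as you describe: a fixed Lie type $X$ gives a uniform interpretation of $X(K)$ in $K$ by $\mathbb{Z}$-polynomial formulas, and interpretations commute with ultraproducts by \los. For the ordinary twisted types (${}^2A_n$, ${}^2D_n$, ${}^3D_4$, ${}^2E_6$) your move to the language of difference fields $(K,\phi)$ is the right one and is how Point handles it. You are also right to flag the Suzuki--Ree families as the genuine sticking point: there the required $\phi$ with $\phi^2=\mathrm{Frob}_p$ is not definable in the pure field, and one must check that the ultraproduct of the $(F_i,\phi_i)$ really is a difference field of the correct shape so that the symbol $X\bigl(\prod_i F_i/\mathcal{U}\bigr)$ makes sense and matches the fixed-point construction. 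This is precisely the delicate part of Point's argument (and is closely related to the bi-interpretability results of Ryten \cite{ryten} that the paper later invokes). Your proposal identifies all the right ingredients and the right difficulty; what remains is the bookkeeping you yourself describe, not a missing idea.
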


If Fact~\ref{Fact1} is combined with Keisler-Shelah's Ultrapower Theorem \cite{keisler} then the following result can be obtained.
\begin{fact} \emph{(Wilson \cite{W})} \label{wilso}
Any group $G$ which is elementarily equivalent to a (twisted) Chevalley   group over a pseudofinite field is pseudofinite.
\end{fact}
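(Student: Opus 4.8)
The plan is to follow the route flagged in the text: feed Point's isomorphism (Fact~\ref{Fact1}) into the Keisler--Shelah Ultrapower Theorem \cite{keisler}. Fix a (twisted) Chevalley group $X(F)$ over a pseudofinite field $F$ with $G\equiv X(F)$. Since elementary equivalence is transitive, it suffices to prove that $X(F)$ itself is pseudofinite, and since infinitude is preserved under elementary equivalence this reduces to showing that $X(F)$ is infinite and elementarily equivalent to an ultraproduct of finite groups. Now $F$ is pseudofinite, so $F\equiv K$ where $K:=\prod_{i\in I}F_i/\mathcal U$ is a non-principal ultraproduct of finite fields $F_i$ (as in Ax's analysis of pseudofinite fields \cite{ax}); in particular $K$, and hence $F$, is infinite, so $X(F)$ is infinite.

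The heart of the argument is to promote the field equivalence $F\equiv K$ to the group equivalence $X(F)\equiv X(K)$. By Keisler--Shelah there are an index set $J$ and an ultrafilter $\mathcal V$ on $J$ together with a field isomorphism $F^{\mathcal V}\cong K^{\mathcal V}$ of the corresponding ultrapowers; applying the Chevalley construction (of type $X$) to both sides gives a group isomorphism $X(F^{\mathcal V})\cong X(K^{\mathcal V})$. As $F$ and $K$ are both pseudofinite, Fact~\ref{Fact1} applied to the two constant families yields $X(F)^{\mathcal V}\cong X(F^{\mathcal V})$ and $X(K)^{\mathcal V}\cong X(K^{\mathcal V})$, whence $X(F)^{\mathcal V}\cong X(K)^{\mathcal V}$, and therefore by \los{} we get $X(F)\equiv X(K)$. (One could also bypass Keisler--Shelah here by observing that $X(K)$ is uniformly interpretable in the field $K$, so that $F\equiv K$ forces $X(F)\equiv X(K)$ directly; I would record both routes.) Then a second application of Fact~\ref{Fact1}, now to the family $\{X(F_i):i\in I\}$ of \emph{finite} Chevalley groups of type $X$, identifies $X(K)=X(\prod_{i\in I}F_i/\mathcal U)$ with the non-principal ultraproduct $\prod_{i\in I}X(F_i)/\mathcal U$ of finite groups. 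Chaining the equivalences and the isomorphism, $G\equiv X(F)\equiv X(K)\cong\prod_{i\in I}X(F_i)/\mathcal U$; the right-hand side is an ultraproduct of finite groups, so the infinite group $G$ is elementarily equivalent to an ultraproduct of finite groups, i.e. pseudofinite.

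The only step carrying genuine content is the implication $F\equiv K\Rightarrow X(F)\equiv X(K)$, and what makes it go through is precisely that the assignment $K\mapsto X(K)$ commutes with ultraproducts (Fact~\ref{Fact1}) --- equivalently, that the Chevalley construction is first-order and uniform in the base field. Everything else is routine manipulation of ultraproducts together with \los{}.
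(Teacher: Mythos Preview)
Your proof is correct and follows exactly the route the paper indicates: combine Point's commutation of the Chevalley construction with ultraproducts (Fact~\ref{Fact1}) with the Keisler--Shelah Ultrapower Theorem to pass from $F\equiv K$ to $X(F)\equiv X(K)$, and then realize $X(K)$ as an ultraproduct of finite groups. The only quibble is the clause ``in particular $K$, and hence $F$, is infinite'': a non-principal ultraproduct of finite fields can be finite, so the correct direction is that $F$ is infinite (being pseudofinite) and therefore $K\equiv F$ is infinite --- but this does not affect the argument.
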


\subsection{Groups of finite Morley rank}

We list  some basic properties of groups of finite Morley rank which will be used in the sequel without references. The proofs can be found in  \cite{bn} as well as in the indicated articles.

\begin{fact} [Macintyre \cite{mac2}]
A group of finite Morley rank has the descending chain condition on definable subgroups, that is, every descending chain of definable subgroups stabilizes after finitely many steps.
\end{fact}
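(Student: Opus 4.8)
The plan is to run the argument on the two basic invariants that a group $G$ of finite Morley rank attaches to each of its definable subsets: the Morley rank $\rk$ and the Morley degree $\deg$. I would show that along any strictly descending chain of definable subgroups the pair $(\rk, \deg)$ strictly decreases in the lexicographic order on $\mathbb{N}\times\mathbb{N}$. Since rank values are non-negative integers and degrees of non-empty definable sets are positive integers, the lexicographic order on such pairs is a well-order, and no infinite strictly descending sequence of pairs can exist; this yields the descending chain condition.

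First I would record the two properties of $\rk$ and $\deg$ that power the argument. Left translation $x\mapsto gx$ is a definable bijection, so for a definable subgroup $H$ and any coset $gH$ one has $\rk(gH)=\rk(H)$ and $\deg(gH)=\deg(H)$. Next comes the key dichotomy for definable subgroups $H\leq K$, obtained from additivity of rank over definable quotients: the coset space $K/H$ is interpretable and again of finite Morley rank, and the quotient map $K\to K/H$ has all its fibres (the cosets of $H$) of rank $\rk(H)$, whence $\rk(K)=\rk(K/H)+\rk(H)$. Consequently either $\rk(H)<\rk(K)$, or $\rk(H)=\rk(K)$, in which case $\rk(K/H)=0$, i.e.\ $K/H$ is finite, i.e.\ $[K:H]<\infty$. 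In this second case $K$ is the disjoint union of its finitely many $H$-cosets, each of rank $\rk(H)=\rk(K)$ and degree $\deg(H)$, so additivity of degree over finite equirank partitions gives $\deg(K)=[K:H]\cdot\deg(H)$.

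Now suppose toward a contradiction that $G_0>G_1>G_2>\cdots$ is an infinite strictly descending chain of definable subgroups, and fix an index $i$. Applying the dichotomy to $G_{i+1}\leq G_i$: if $\rk(G_{i+1})<\rk(G_i)$, the rank strictly drops. Otherwise $\rk(G_{i+1})=\rk(G_i)$ and $[G_i:G_{i+1}]$ is finite; since the inclusion is proper this index is at least $2$, so $\deg(G_i)=[G_i:G_{i+1}]\cdot\deg(G_{i+1})\geq 2\,\deg(G_{i+1})>\deg(G_{i+1})$, and the degree strictly drops while the rank is unchanged. In both cases $(\rk(G_i),\deg(G_i))>(\rk(G_{i+1}),\deg(G_{i+1}))$ in the lexicographic ordering. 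Hence the pairs $(\rk(G_i),\deg(G_i))$ form an infinite strictly descending sequence in $(\mathbb{N}\times\mathbb{N},<_{\mathrm{lex}})$, contradicting the well-foundedness of this order, and the chain must in fact stabilise.

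The only genuine work lies in the dichotomy lemma and in the additivity of degree over finite equirank partitions, since these are precisely the points where the definability and additivity axioms for Morley rank are invoked (in particular the finite-Morley-rank structure of the interpretable quotient $K/H$); everything afterward is bookkeeping on the lexicographic well-order. I would therefore expect the additivity-of-rank step to be the main obstacle to state cleanly, while the chain argument itself is routine.
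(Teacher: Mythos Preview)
The paper does not prove this statement: it is recorded as a Fact with a citation to Macintyre, and the surrounding text explicitly defers proofs of these background properties to \cite{bn} and the indicated references. So there is no in-paper argument to compare against.

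Your proof is correct and is essentially the standard one (as in Borovik--Nesin): for a proper definable inclusion $H<K$ the additivity of rank over the interpretable quotient $K/H$ forces either $\rk(H)<\rk(K)$ or $[K:H]$ finite and $>1$, and in the latter case the degree formula $\deg(K)=[K:H]\cdot\deg(H)$ gives $\deg(H)<\deg(K)$; hence the pair $(\rk,\deg)$ strictly decreases lexicographically along any proper descending chain, and well-foundedness of $\mathbb{N}\times\mathbb{N}$ under $<_{\mathrm{lex}}$ finishes. The only point to be explicit about is that the rank and degree axioms you invoke (additivity over definable fibrations, additivity of degree over finite equirank partitions) are part of the Borovik--Nesin axiomatisation of ``group of finite Morley rank'' used in the paper, so you are not smuggling in anything extra.
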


The descending chain condition on definable subgroups is a strong property which allows one to define the notion of \emph{definable hull}. For any subset $X$ of a group of finite Morley rank $G$, there is a smallest definable subgroup of $G$ containing $X$, which is called the \emph{definable hull} of $X$ and denoted by $\defi(X)$.






The following fact is a corollary of a result by John T. Baldwin and Jan Saxl \cite{bs}.
\begin{fact} [\cite{abc}, Corollary 2.9] \label{cdim}
For any subset $X$ of a finite Morley rank group $G$, the centralizer $C_G(X)$ is a definable subgroup. Moreover, there is an integer $n$ such that for any $Y \subseteq G$ there is  $Y_0 \subseteq Y$  with $|Y_0| \leqslant n$ and $C_G(Y) = C_G(Y_0)$.
\end{fact}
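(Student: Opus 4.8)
My plan is to prove the two assertions separately, deriving the definability of $C_G(X)$ from the descending chain condition of Macintyre and the uniform bound from the Baldwin--Saxl mechanism behind the cited corollary.

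First I would establish definability. For a single element $g$ the centralizer $C_G(g)$ is defined by the formula $[x,g]=1$, so it is a definable subgroup; for a finite set $X$ the centralizer $C_G(X)=\bigcap_{g\in X}C_G(g)$ is a finite intersection of such subgroups and hence again definable. For an arbitrary $X$ I would pass to the family $\mathcal{F}=\{\,C_G(X_0)\ :\ X_0\subseteq X \text{ finite}\,\}$, which consists of definable subgroups and is closed under finite intersection since $C_G(X_0)\cap C_G(X_1)=C_G(X_0\cup X_1)$. By the descending chain condition on definable subgroups, $\mathcal{F}$ has a minimal element $C_G(X_0)$. For every $g\in X$ the subgroup $C_G(X_0\cup\{g\})$ lies in $\mathcal{F}$ and is contained in $C_G(X_0)$, so minimality forces $C_G(X_0\cup\{g\})=C_G(X_0)$, i.e. $C_G(X_0)\subseteq C_G(g)$; hence $C_G(X_0)\subseteq C_G(X)$, and the reverse inclusion is trivial, so $C_G(X)=C_G(X_0)$ is definable. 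In particular this already shows that every $C_G(Y)$ equals $C_G(Y_0')$ for some finite $Y_0'\subseteq Y$, and what remains is to bound $|Y_0'|$ independently of $Y$.

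For the uniform bound I would run the Baldwin--Saxl argument on the uniformly definable family of centralizers of single elements, all instances of the one formula $\varphi(x;y)\equiv\bigl([x,y]=1\bigr)$. Suppose, toward a contradiction, that no such $n$ exists. Then, passing to a minimal subfamily realizing a given intersection, for arbitrarily large $m$ I can find $g_1,\dots,g_m$ whose intersection is irredundant, meaning $D\subsetneq D_j$ for every $j$, where $D=\bigcap_{i=1}^{m}C_G(g_i)$ and $D_j=\bigcap_{i\ne j}C_G(g_i)$. Choosing $c_j\in D_j\setminus D$, one has $c_j\in C_G(g_i)$ precisely when $i\ne j$. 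For $S\subseteq\{1,\dots,m\}$ set $c_S=\prod_{j\in S}c_j$, the factors taken in increasing order of index; writing $c_S=u\,c_i\,v$ with $u,v\in C_G(g_i)$ whenever $i\in S$, one checks that $c_S$ commutes with $g_i$ if and only if $i\notin S$.

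As $S$ ranges over all subsets, the elements $c_S$ together with the parameters $g_1,\dots,g_m$ witness that $\varphi$ shatters a set of size $m$. Since a group of finite Morley rank is $\omega$-stable, $\varphi$ is a stable formula and therefore does not have the independence property, so the size of a set it can shatter is bounded by an integer $n=n(\varphi)$; this forces $m\le n$, contradicting that $m$ was arbitrary. Equivalently, every irredundant intersection of centralizers of single elements involves at most $n$ terms, so applying this to the finite set $Y_0'$ from the first part and discarding redundant elements yields $Y_0\subseteq Y_0'\subseteq Y$ with $|Y_0|\le n$ and $C_G(Y)=C_G(Y_0')=C_G(Y_0)$. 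The first part is routine; the essential and only delicate step is the combinatorial construction of the shattering configuration together with the appeal to stability that converts ``arbitrarily long irredundant intersections'' into a contradiction, which is exactly the Baldwin--Saxl phenomenon underlying the stated fact.
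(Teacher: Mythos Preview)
The paper does not supply a proof of this statement: it is quoted as a fact from \cite{abc} (a corollary of the Baldwin--Saxl result), so there is no in-paper argument to compare against. Your write-up is a correct reconstruction of the standard proof behind that citation: the descending chain condition gives definability of $C_G(X)$ via a minimal finite subcentralizer, and the uniform bound is exactly the Baldwin--Saxl chain condition for the uniformly definable family $\{C_G(g):g\in G\}$.

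One minor remark on presentation rather than correctness: the classical Baldwin--Saxl argument extracts the \emph{order property} from an irredundant intersection (taking partial products $b_j=c_1\cdots c_j$ so that $\varphi(b_j,g_i)$ holds iff $i>j$), and then invokes stability directly. Your variant instead builds, from the same witnesses $c_j$, a full shattering configuration and appeals to the fact that stable formulas lack the independence property. This is perfectly valid since stability implies NIP, and the combinatorial verification you give (writing $c_S=u\,c_i\,v$ with $u,v\in C_G(g_i)$ when $i\in S$) is sound; it is simply a slightly stronger combinatorial conclusion than what is strictly needed, and a referee might expect the order-property version as the ``canonical'' Baldwin--Saxl proof.
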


  A group with the property as in the moreover part of Fact~\ref{cdim} is said to have \emph{finite centralizer dimension}. More precisely, for any integer $k\geqslant 0$, a group has centralizer dimension $k$ if it has a proper descending centralizer chain of length $k$ and has no such chain of length greater than $k$. By a proper descending centralizer chain of length $k$ we mean the chain of the form
\[ G= C_G(x_0) > C_G(x_1) > C_G(x_1,x_2) >  \ldots > C_G(x_1,\ldots, x_k) = Z(G). \]
It is well-known that the class of groups with finite centralizer dimension is closed under taking subgroups and finite direct products \cite{myas}. Moreover, for any integer $k\geqslant 0$,  having centralizer dimension $k$ is a first order property in the language of groups (see \cite{duncan} and \cite{len}).

\section{Definably Simple Pseudofinite Groups} \label{simple}
The main result of this section is the characterization of non-abelian definably simple pseudofinite groups of finite centralizer dimension as (twisted) Chevalley  groups over pseudofinite fields. Firstly, we state the theorem of Wilson about the classification of simple pseudofinite groups and point out some results in the literature strengthening Wilson's theorem. Then, we  work on definably simple pseudofinite groups and after analyzing the structure of the abelian ones, we concentrate on the non-abelian case. We observe that the first part of Wilson's classification proof works for this case. Then, using the finite centralizer dimension property we obtain our result.

There is a weaker version of simplicity of a group which arises in model theory. A group is called \emph{definably simple} if it has no non-trivial definable proper normal
subgroups. In the class of non-abelian groups of finite Morley rank, which includes
non-abelian algebraic groups over algebraically closed fields,
definably simple groups coincide with the simple ones \cite{poizat}  (note that Wagner's result mentioned in the introduction is much more general).
However, in general, definably simple groups need not be simple. Wilson proved that no ultraproduct of alternating groups is simple (unless it is finite) while it is definably simple (see \cite{wilson}). Moreover, while no infinite abelian group is simple, some of them are
definably simple (see Fact~\ref{abelian}). Therefore, the distinction between the notions of \emph{definably simple} and
\emph{simple} becomes important.

Felgner conjectured in \cite{felgner} that simple pseudofinite groups are isomorphic to (twisted) Chevalley  groups over pseudofinite fields. Although Felgner obtained important results, it was Wilson who classified the simple pseudofinite
groups, however, only up to elementary equivalence.

\begin{fact} [Wilson \cite{W}]  \label{wilsontheorem} Every simple pseudofinite
group is elementarily equivalent to a (twisted) Chevalley  group
over a pseudofinite field.
\end{fact}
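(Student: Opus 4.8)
The plan is to show that the complete theory of $G$ coincides with that of some $X(F)$ with $F$ pseudofinite, by analyzing an ultraproduct presentation of $G$ and invoking the classification of finite simple groups (CFSG). Since $G$ is pseudofinite it is elementarily equivalent to a non-principal ultraproduct $\prod_{i\in I} G_i/\mathcal U$ of finite groups (see \cite{wilson}). I would first reduce to the case where the $G_i$ are finite \emph{simple}. Being simple, $G$ is in particular definably simple, and definable simplicity is a first-order scheme, hence passes to $H := \prod_i G_i/\mathcal U$. Using the uniform definability of the soluble radical and of the socle across all finite groups (a consequence of CFSG), the subgroups $\prod_i R_i/\mathcal U$ and $\prod_i \mathrm{Soc}(G_i)/\mathcal U$ are definable and normal in $H$; definable simplicity of $H$ then forces the soluble radical to vanish and the socle to be a single nonabelian simple factor $\mathcal U$-almost everywhere, so that after passing to these definable sections we may assume $G \equiv \prod_i S_i/\mathcal U$ with each $S_i$ a finite simple group. (The abelian case is disposed of separately, since an infinite abelian group is never simple.)

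With this reduction in hand I would apply CFSG to the $S_i$ and split $I$ according to whether $S_i$ is cyclic of prime order, alternating, of Lie type, or sporadic; exactly one of these classes lies in $\mathcal U$. The cyclic case would make $G$ abelian, and the sporadic case would make $G$ finite (finitely many isomorphism types force a single type on a set in $\mathcal U$), both contradictions. The remaining and genuinely substantial task is to rule out alternating groups of unbounded degree and groups of Lie type of unbounded Lie rank, leaving groups of a single fixed Lie type $X$ over the finite fields $\mathbb F_{q_i}$.

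This last step is where I expect the main obstacle, and where the full strength of \emph{simplicity} (rather than mere definable simplicity) must be used. Ultraproducts of alternating groups of unbounded degree, and of classical groups of unbounded rank, are definably simple but \emph{not} simple (Wilson \cite{wilson}): they carry a proper nontrivial normal subgroup of bounded-width elements which is not definable by any single formula. For a fixed Lie type, by contrast, the covering numbers of nontrivial conjugacy classes are bounded uniformly in the field, so such groups are \emph{uniformly simple} -- a property expressible by a first-order scheme. The delicate point is precisely that simplicity is not first-order and so does not transfer along $\equiv$; the crux of the argument is to show that a simple pseudofinite $G$ must nonetheless be uniformly simple, which excludes the unbounded families and pins the $S_i$ down to a fixed type of bounded rank. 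Granting this, only finitely many Lie types occur, so a single type $X$ is realized on a set in $\mathcal U$, and there $S_i \cong X(\mathbb F_{q_i})$.

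Finally I would conclude with Point's theorem (Fact~\ref{Fact1}): one has $\prod_i X(\mathbb F_{q_i})/\mathcal U \cong X\bigl(\prod_i \mathbb F_{q_i}/\mathcal U\bigr)$, and the ultraproduct $F := \prod_i \mathbb F_{q_i}/\mathcal U$ of finite fields is a pseudofinite field. Hence $G \equiv X(F)$, a (twisted) Chevalley group over a pseudofinite field, as required.
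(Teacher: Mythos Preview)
The paper does not give its own proof of this fact; it is cited from Wilson~\cite{W}. What the paper \emph{does} do is reproduce the ingredients of Wilson's first reduction in Facts~\ref{Fact2}--\ref{Fact5} and reuse them in Lemma~\ref{Lemma1}, so those are the relevant comparison points.

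Your overall architecture is correct and matches Wilson's: reduce to an ultraproduct of finite simple groups, then use genuine simplicity (not definable simplicity) to exclude the alternating groups of unbounded degree and the classical groups of unbounded Lie rank, leaving a fixed Lie type over varying finite fields, whence Fact~\ref{Fact1} finishes. Your account of the second step is right and is precisely where Wilson uses that ultraproducts of $\mathrm{Alt}(n)$ (and of classical groups with $n\to\infty$) are definably simple but not simple.

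Where your proposal diverges from Wilson, and where it is thin, is the first reduction. Wilson does \emph{not} appeal to a uniform definition of the socle. He uses Felgner's sentence $\sigma$ (Definition~\ref{sigma}, Fact~\ref{Fact3}): $\sigma$ holds in every non-abelian simple group, hence in $G$, hence in almost every $G_i$; and for finite $G_i\models\sigma$ the socle is automatically a single non-abelian simple group. He then uses the uniform bound on commutator width (Fact~\ref{Fact2}) together with Fact~\ref{Fact4} to exhibit a uniformly definable proper normal subgroup whenever $G_i$ is not simple, contradicting definable simplicity. Your route instead invokes ``uniform definability of the soluble radical and of the socle''. The soluble radical is indeed uniformly definable in finite groups, but that is a \emph{later} theorem of Wilson, not part of~\cite{W}; and there is no off-the-shelf uniform definition of $\mathrm{Soc}(G_i)$ in all finite groups that lets you read off, from definable simplicity of the ultraproduct alone, that the socle is a \emph{single} non-abelian simple factor almost everywhere. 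That conclusion is exactly what $\sigma$ buys in Wilson's argument, and your sketch does not supply a substitute for it. If you want to keep your approach, you need to either justify a uniform socle definition with the required properties, or revert to the $\sigma$ + commutator-width mechanism the paper records.
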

As it is pointed out by Wilson in \cite{W}, the elementary equivalence can be
replaced by an isomorphism for the untwisted case in view of results obtained by Simon Thomas in his dissertation \cite{T}. Moreover,  M. Ryten's recent results  in his thesis \cite{ryten} ensure that Wilson's theorem can be strengthened for both  Chevalley and twisted Chevalley groups and hence Felgner's conjecture is true.

Now, we concentrate on definably simple pseudofinite groups starting from the abelian ones. The following fact is a folklore.

\begin{fact}\label{abelian}
Let $A$ be an infinite abelian group. The following statements are equivalent.
\begin{enumerate}
\item $A$ is definably simple.
\item $A$ is torsion-free divisible.
\item $A\equiv \prod_{p \in I} C_{p} / \mathcal{U} \equiv ({\mathbb{Q}}, +)$ where  $\mathcal{U}$ is a non-principal ultrafilter on the set $I$ of all prime numbers and  $C_p$ is the cyclic group of order $p$.
\end{enumerate}
\end{fact}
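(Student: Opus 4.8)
The plan is to prove the three statements equivalent by a cycle $(1)\Rightarrow(2)\Rightarrow(3)\Rightarrow(1)$, the last of which is essentially immediate.

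\emph{$(1)\Rightarrow(2)$.} Suppose $A$ is an infinite abelian definably simple group. For each integer $n\geqslant 2$, the map $x\mapsto nx$ is an endomorphism of $A$, so its kernel $A[n]=\{x\in A: nx=0\}$ and its image $nA$ are definable subgroups, hence normal (as $A$ is abelian). By definable simplicity each is either trivial or all of $A$. If $A[n]=A$ for some $n\geqslant 2$, then $A$ has bounded exponent; but then for the least prime $p\mid n$ the subgroup $A[p]$ is a nontrivial proper (since $A$ is infinite and $A[p]$ is an $\mathbb F_p$-vector space that would have to be one-dimensional to be both definable and all of $A$ — more carefully: $pA=0$ forces $A=A[p]$, and then any proper nontrivial subspace is definable, contradicting infiniteness together with definable simplicity). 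So $A[n]=1$ for all $n$, i.e.\ $A$ is torsion-free. Similarly $nA=1$ is impossible (it would force $A$ trivial), so $nA=A$ for all $n$, i.e.\ $A$ is divisible. The only point needing a little care is ruling out an infinite bounded-exponent definably simple abelian group: such a group is an infinite vector space over a finite field, which has plenty of definable proper nontrivial subgroups (one-dimensional subspaces), contradicting definable simplicity. Conversely, a torsion-free divisible abelian group is a $\mathbb Q$-vector space, and one checks directly that every definable subgroup of a $\mathbb Q$-vector space (in the pure group language) is either $0$ or the whole group when the space is $1$-dimensional; but we need this for arbitrary dimension — here the cleaner route is to note $A\equiv(\mathbb Q,+)$ once we have $(2)\Rightarrow(3)$, and $(\mathbb Q,+)$ is definably simple because its definable subgroups are exactly $\{0\}$ and $\mathbb Q$ (a divisible torsion-free group of finite Morley rank / a group interpretable in a $\mathbb Q$-vector space has this property). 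I will therefore present $(2)\Rightarrow(3)\Rightarrow(1)$ and only use the direct argument above for $(1)\Rightarrow(2)$.

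\emph{$(2)\Rightarrow(3)$.} A torsion-free divisible abelian group is a nonzero $\mathbb Q$-vector space, and the theory of nontrivial $\mathbb Q$-vector spaces in the pure group language is complete (it is $\aleph_0$-categorical up to the usual dimension considerations; more precisely all such groups are elementarily equivalent, being models of the complete theory of torsion-free divisible abelian groups). Hence $A\equiv(\mathbb Q,+)$. For the ultraproduct: for a non-principal ultrafilter $\mathcal U$ on the set $I$ of primes, $G:=\prod_{p\in I}C_p/\mathcal U$ is an infinite abelian group; it is torsion-free because for each fixed $n\geqslant 2$ the set $\{p: C_p$ has an element of order dividing $n$ and $>1\}=\{p:p\mid n\}$ is finite, hence not in $\mathcal U$, so by \los{} $G$ has no element of order $n$; and it is divisible because for each fixed $n$ the statement ``every element is an $n$-th multiple'' holds in $C_p$ for all $p\nmid n$, a cofinite and hence $\mathcal U$-large set of primes. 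Thus $G$ is torsion-free divisible abelian, so by the completeness just invoked $G\equiv(\mathbb Q,+)$.

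\emph{$(3)\Rightarrow(1)$.} Definable simplicity is a first-order property of a group: ``$A$ is definably simple'' is expressed by the scheme asserting, for each group-formula $\varphi(x,\bar y)$, that for all parameters $\bar y$, if $\varphi(A,\bar y)$ is a subgroup then it is normal implies it is $\{e\}$ or all of $A$ — normality being automatic here but the point is that this is a set of first-order sentences. Since $(\mathbb Q,+)$ is definably simple (its only definable subgroups are $0$ and $\mathbb Q$, as it is a model of the complete, superstable theory of torsion-free divisible abelian groups, whose definable subgroups in one variable are classified), and $A\equiv(\mathbb Q,+)$, we conclude $A$ is definably simple. The main obstacle is the bookkeeping in $(1)\Rightarrow(2)$: making rigorous that an infinite abelian group none of whose proper nontrivial definable subgroups is normal (equivalently, which has no proper nontrivial definable subgroup at all, since abelian) cannot have bounded exponent — equivalently that $(\mathbb Q,+)$, and not any $\bigoplus\mathbb F_p$, is forced. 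Everything else is routine application of \los{} and completeness of the theory of nontrivial torsion-free divisible abelian groups.
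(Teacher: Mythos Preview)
Your proof is correct and follows the same cycle $(a)\Rightarrow(b)\Rightarrow(c)\Rightarrow(a)$ as the paper, relying on completeness of the theory of nontrivial torsion-free divisible abelian groups for the middle step and on preservation of definable simplicity under elementary equivalence for the last. The paper's treatment of torsion-freeness in $(a)\Rightarrow(b)$ is cleaner than your $A[n]$/vector-space detour: any nontrivial element of finite order generates a finite---hence definable, nontrivial, and (since $A$ is infinite) proper---cyclic subgroup, immediately contradicting definable simplicity.
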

\begin{proof}
Assume that $(a)$ holds. Then, any non-trivial element of finite order generates a non-trivial proper definable normal subgroup of $A$, so $A$ is torsion-free. Moreover,  if $A$ is not divisible, then $nA$ is a proper definable normal subgroup of $A$  for some integer $n\geqslant 2$ which is not possible by our assumption. Hence we get $(b)$. Assume that $(b)$ holds. Since the theory of divisible abelian  torsion-free groups is complete and all groups in part $(c)$ are models of this theory, we get the elementary equivalences in $(c)$.  As $\prod_{p \in I} C_{p}/\mathcal{U}$ is a definably simple group we have the implication $(c)\Rightarrow (a)$.
\end{proof}

The classification of  non-abelian definably simple pseudofinite groups will be obtained in two steps. Firstly, we show that every non-abelian definably simple pseudofinite group is elementarily equivalent to an
ultraproduct of non-abelian finite simple groups. This result follows from Wilson's proof of Fact~\ref{wilsontheorem}. However, for a  self-contained proof, we include here the results obtained by Wilson and we observe that his arguments work in the case of  non-abelian definably simple pseudofinite groups. In the second step, we proceed differently by using our
assumption on centralizer chains.

\begin{definition}\label{sigma}
Let $\sigma$ denote the following first order sentence  which was defined by Felgner in \cite{felgner}:
$$\forall x\forall y \left[ (x \neq 1 \wedge C_G(x, y)\neq 1) \rightarrow \bigcap_{g\in G}(C_G(x,y)C_G(C_G(x,y)))^g=1 \right]. $$
Above, $G$ stands for an arbitrary group, $C_G(x,y)$ denotes the centralizer of $\{x,y\}$ in $G$ and for any subset $X\subseteq G$, we have $X^g = gX g^{-1}$.
\end{definition}

Throughout this paper, $\soc(G)$ denotes the
subgroup generated by minimal normal subgroups of a group $G$, the so-called \emph{socle} of $G$.

Now, we recall some facts from \cite{W} which will be needed in the sequel.

\begin{fact}[Wilson \cite{W}] \label{Fact2} There is an integer $k$ such that
each element of each finite non-abelian simple group $G$ is a product of $k$
commutators.
\end{fact}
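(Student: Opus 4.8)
The plan is to reduce to the Classification of Finite Simple Groups (CFSG) and to bound the commutator width separately within each of the three resulting families, uniformly across each family; one then takes $k$ to be the maximum of these finitely many bounds. First note that a non-abelian simple group is perfect (its commutator subgroup is a non-trivial normal subgroup, hence the whole group), so for each finite non-abelian simple group $G$ the commutator width $w(G)$ --- the least $m$ such that every element of $G$ is a product of $m$ commutators --- is already finite; the content of the statement is only the uniformity of $w(G)$ over all such $G$.

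By CFSG, $G$ is either (i) an alternating group $\alt_n$ with $n\geqslant 5$, or (ii) one of the finitely many sporadic simple groups (including the Tits group), or (iii) a (possibly twisted) group of Lie type $X(q)$, with $X$ one of the symbols of this paper. Case (ii) is immediate: there are finitely many such $G$, so set $k_{\mathrm{spor}}=\max w(G)$ over them. For case (i), Ore's theorem on alternating groups gives $w(\alt_n)=1$ for $n\geqslant 5$; if one prefers not to invoke it, an elementary argument with cycle types provides a small absolute bound.

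The substantive case is (iii), in which \emph{both} the Lie rank and the field size vary, and a rank-independent estimate is needed: the naive route --- writing $g=u_1\,\dot w\,t\,u_2$ by the Bruhat decomposition and expressing each factor through root elements --- only bounds $w(X(q))$ by something of the order of the number of positive roots, which grows with the rank. So I would split by type. The exceptional families, together with ${^2B_2},{^2G_2},{^2F_4}$, have bounded rank, so up to the choice of field there are only finitely many shapes, and for each a bound on $w$ independent of $q$ is available (e.g.\ from the Bruhat argument above, once one checks that root elements are bounded products of commutators, handling the few small fields separately). For the classical families $A_n,B_n,C_n,D_n,{^2A_n},{^2D_n}$ I would invoke Thompson-type results: over a field $K$ with $|K|$ above a small absolute threshold (the remaining small-field cases treated individually) every element of $\mathrm{SL}_n(K)$ is a single commutator, with analogous absolute-bound statements for the symplectic, unitary and orthogonal families --- and, crucially, these bounds do not depend on $n$. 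Since passing from these matrix groups to the simple quotients $X(q)$ cannot increase $w$, taking the maximum over the finitely many family bounds yields the desired $k$.

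I expect the main obstacle to be precisely the unbounded-rank classical case: producing bounds on commutator width in $A_n(q),B_n(q),C_n(q),D_n(q)$ and their twisted analogues that are independent of $n$. This is where the genuine work lies, resting on a careful block-matrix reduction --- decomposing the $n\times n$ problem into controlled pieces and applying a Shoda/Thompson commutator criterion to each --- rather than on the soft generalities that dispatch the other families. At the extreme the full Ore conjecture would give $k=1$ uniformly, but only much weaker bounded-width statements are actually needed here.
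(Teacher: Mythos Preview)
The paper does not give its own proof of this statement: it is quoted as a \emph{Fact} attributed to Wilson \cite{W} and used as a black box. So there is no in-paper argument to compare your proposal against. The paper's only commentary is Remark~\ref{thompson}, which notes that the far stronger Ore conjecture ($k=1$) has since been proved \cite{ore}; you mention this yourself at the end.

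That said, your sketch is in line with how Wilson actually establishes the bounded commutator width, and your identification of the genuine obstacle is accurate: the sporadic and bounded-rank families are trivial or soft, and the real content is a rank-independent bound for the classical families. Your appeal to Shoda/Thompson-style matrix arguments for $\mathrm{SL}_n$ and its relatives, with small fields handled separately, is the right shape of argument. If you wanted to tighten the proposal, the one place to be more careful is the passage ``passing from these matrix groups to the simple quotients $X(q)$ cannot increase $w$'': this is true, but you should also make sure the bounds you quote for the classical matrix groups really are uniform in both $n$ and $q$ above the stated threshold, since that uniformity is exactly the point.
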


\begin{remark}\label{thompson}
A stronger version of this result is known as \emph{Ore's conjecture} which states that every element of a finite non-abelian simple group is a commutator \cite{O}. This old conjecture was followed by a stronger
conjecture, which is attributed to John Thompson,
stating that for every finite non-abelian  simple group $G$, there exists a conjugacy
class $C$ such that $G = CC$. We will refer to Thompson's conjecture later in the text. For a detailed information about the status of these conjectures see the survey article  \cite{kappe}. The proof of Ore's conjecture has been  recently completed (see \cite{ore}).
\end{remark}

\begin{fact} [Wilson \cite{W}] \label{Fact3}
Let $\sigma$ be the sentence from Definition \ref{sigma}.
\begin{enumerate}
\item If $G$ is a non-abelian simple group, then $G \models \sigma$, that is, $\sigma$ holds in $G$.

\item If $G$ is finite and $G \models \sigma $, then $\soc(G)$ is a
non-abelian simple group.
\end{enumerate}
\end{fact}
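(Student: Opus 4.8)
The plan is to exploit the elementary observation that if $C=C_G(x,y)$ then every element of $C_G(C)$ commutes with every element of $C$; hence $C$ and $C_G(C)$ commute elementwise, so $H:=C\,C_G(C)$ is genuinely a subgroup of $G$, and the displayed condition in $\sigma$ says exactly that the largest normal subgroup of $G$ contained in $H$ is trivial. (Incidentally, this makes clear that $\sigma$ is first order, since ``$v\in H^{g}$'' unwinds to an existential formula in $x,y,g$.) So $\sigma$ asserts: for every $x\neq 1$ and every $y$ with $C_G(x,y)\neq 1$, the subgroup $C_G(x,y)\,C_G(C_G(x,y))$ contains no nontrivial normal subgroup of $G$.

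For part (a), let $G$ be non-abelian simple and fix admissible $x,y$, i.e.\ $x\neq1$ and $C:=C_G(x,y)\neq1$. The normal core $N:=\bigcap_{g}H^{g}$ is normal in $G$, hence $N=1$ (what we want) or $N=G$. In the second case $H=G$, i.e.\ $C\,C_G(C)=G$; writing a general $g\in G$ as $g=cc'$ with $c\in C$, $c'\in C_G(C)$, and using $[C,C_G(C)]=1$, we get $C^{g}=(C^{c})^{c'}=C$, so $C\trianglelefteq G$ and therefore $C=G$ by simplicity. But then $C_G(x,y)=G$ forces $x\in Z(G)=1$, contradicting $x\neq1$. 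Hence $N=1$, i.e.\ $G\models\sigma$.

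For part (b), let $G$ be finite with $G\models\sigma$. First I would rule out a nontrivial abelian normal subgroup $A$: choosing $1\neq a\in A$ and taking $x=y=a$, one has $A\le C_G(a)\le H$ with $A\trianglelefteq G$, so $A$ lies in the normal core of $H$, a contradiction. Consequently every minimal normal subgroup of $G$ is a direct product of isomorphic non-abelian simple groups, and $\soc(G)$ is an internal direct product $T_{1}\times\cdots\times T_{n}$ of non-abelian simple groups; it remains to prove $n=1$. Suppose $n\geq2$. Using that every finite simple group is $2$-generated, pick $x,y\in T_{1}$ with $\langle x,y\rangle=T_{1}$ (then automatically $x\neq1$, as $T_{1}$ is non-cyclic). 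Now $C:=C_G(x,y)=C_G(\langle x,y\rangle)=C_G(T_{1})\supseteq T_{2}\cdots T_{n}\neq1$, so $(x,y)$ is admissible, while $C_G(C)=C_G(C_G(T_{1}))\supseteq T_{1}$; hence $H=C\,C_G(C)\supseteq (T_{2}\cdots T_{n})\,T_{1}=\soc(G)$. Since $\soc(G)$ is a nontrivial normal subgroup of $G$ contained in $H$, this contradicts $\sigma$. Therefore $n=1$ and $\soc(G)=T_{1}$ is non-abelian simple.

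The one step that is not purely formal is the choice of witnesses in (b) when $\soc(G)$ is a single minimal normal subgroup equal to a power $T^{n}$ with $G$ permuting the $n$ components: one cannot use an element lying in a $G$-normal factor, because there is none, and the resolution is precisely the trick above — let $x,y$ generate a single component $T_{1}$, so that $C_G(x,y)$ collapses onto $C_G(T_{1})$ while $C_G(C_G(x,y))$ recovers $T_{1}$, the product then filling out all of $\soc(G)$. This is also the only point at which the classification of finite simple groups enters (through $2$-generation); everything else is elementary group theory.
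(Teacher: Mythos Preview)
The paper does not give its own proof of Fact~\ref{Fact3}; it is cited from Wilson~\cite{W}. However, the argument for part~(a) is reproduced verbatim (in the slightly more general setting of non-abelian \emph{definably} simple groups) in the proof of Lemma~\ref{Lemma1}(a), and your proof of (a) is the same argument: if the normal core $N$ were nontrivial, simplicity forces $N=G$, hence $C_G(x,y)C_G(C_G(x,y))=G$, hence $C_G(x,y)\trianglelefteq G$, hence $C_G(x,y)=G$, hence $x\in Z(G)=1$, a contradiction.

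For part~(b), your argument is correct and is precisely the standard one (due to Felgner and Wilson). The two essential ingredients are exactly those you isolate: first, an abelian normal subgroup $A$ would survive inside the core of $H$ by taking $x=y=a\in A\setminus\{1\}$; second, if $\soc(G)=T_1\times\cdots\times T_n$ with $n\geqslant2$, the $2$-generation of finite simple groups (a consequence of CFSG) lets you choose $x,y$ with $\langle x,y\rangle=T_1$, so that $C_G(x,y)=C_G(T_1)\supseteq T_2\cdots T_n$ and $C_G(C_G(x,y))\supseteq T_1$, whence $\soc(G)\leqslant H$ contradicts $\sigma$. Your closing remark that $2$-generation is the only place CFSG enters is also accurate.
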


\begin{fact} [Wilson \cite{W}]
\label{Fact4} Let $G$ be a finite group with a
non-abelian simple socle. If $G$ is not simple then $G^{\prime }\neq G$.
Moreover, if every element of $\soc(G)$ is a product of $k$ commutators, then
every element of $G^{\prime }$ is a product of $k+3$ commutators.
\end{fact}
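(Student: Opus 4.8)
The plan is to realise $G$ as a group of automorphisms of its socle $S=\soc(G)$ and then to propagate commutator-width information from $S$ up to all of $G$ through the small solvable quotient $G/S$. First I would record that $G$ acts faithfully on $S$ by conjugation: $C_G(S)$ is a normal subgroup of $G$ with $C_G(S)\cap S\le Z(S)=1$, so if it were non-trivial it would contain a minimal normal subgroup of $G$, hence a non-trivial subgroup of $\soc(G)=S$ lying inside $C_G(S)\cap S=1$, which is impossible. Therefore conjugation embeds $G$ into $\aut(S)$ with $S$ identified with $\mathrm{Inn}(S)$, so $S\le G\le\aut(S)$ and $Q:=G/S$ embeds into $\mathrm{Out}(S)$; by the classification of finite simple groups (Schreier's conjecture) $\mathrm{Out}(S)$ is solvable, and hence so is $Q$.

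For the first assertion: since $S$ is simple, if $G$ is not simple then $G\neq S$, i.e. $Q\neq 1$. A non-trivial finite solvable group $Q$ has $Q'\neq Q$, that is $G'S\neq G$, and in particular $G'\neq G$.

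For the ``moreover'' part I would first observe that $S\le G'$. Indeed $G'\cap S$ is normal in $G$, so by minimality of $S$ it is $1$ or $S$; if it were $1$ then $[G,S]\le G'\cap S=1$, contradicting $Z(S)=1$. Hence $S\le G'$ and $G'/S=(G/S)'=Q'$. The key structural fact, again a consequence of the classification, is that $\mathrm{Out}(S)$, and therefore each of its subgroups --- in particular $Q$ --- has the property that every element of its derived subgroup is a product of at most three commutators. Granting this, take $g\in G'$; its image $\bar g$ lies in $Q'$, so $\bar g=\bar c_1\bar c_2\bar c_3$ with each $\bar c_i$ a commutator in $Q$, and lifting we choose commutators $c_1,c_2,c_3$ of elements of $G$ mapping onto the $\bar c_i$. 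Then $t:=(c_1c_2c_3)^{-1}g\in S$, so by hypothesis $t=d_1\cdots d_k$ with each $d_j$ a commutator of elements of $S\le G$, whence
\[ g=c_1c_2c_3\,d_1\cdots d_k \]
is a product of $k+3$ commutators in $G$, as desired.

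The main obstacle is the structural statement about $\mathrm{Out}(S)$ used in the last step: this is where the classification of finite simple groups genuinely enters, and extracting the explicit constant $3$ requires going through the description of $\mathrm{Out}(S)$ --- a normal abelian subgroup of inner-diagonal automorphisms, with field and graph automorphisms above it --- together with the classical fact that in a group having a cyclic normal subgroup with cyclic quotient every element of the derived subgroup is a single commutator, applied layer by layer while keeping track of commutator widths. The two preliminary reductions, by contrast, are soft.
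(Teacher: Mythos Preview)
The paper does not give its own proof of this statement; it is quoted verbatim as a fact from Wilson \cite{W} and used as a black box in the proof of Lemma~\ref{Lemma1}. So there is nothing in the present paper to compare your argument against.

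That said, your outline is essentially Wilson's own argument: embed $G$ in $\aut(S)$ via the faithful conjugation action, invoke Schreier's conjecture to see that $Q=G/S\le\mathrm{Out}(S)$ is solvable (giving $G'\neq G$ when $Q\neq 1$), observe $S\le G'$, and then lift a short commutator expression for $\bar g\in Q'$ back to $G$ and absorb the discrepancy into $S$. The only point requiring care is the one you flag: that every element of $Q'$ is a product of at most three commutators in $Q$. Wilson obtains this from the explicit shape of $\mathrm{Out}(S)$ for finite simple $S$ (diagonal, field, and graph layers), together with the classical fact that an abelian-by-cyclic group has commutator width one; your sketch of how to extract the constant $3$ layer by layer is accurate in spirit, though turning it into a clean proof does require tracking exactly which normal subgroups one is using at each stage. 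There is no genuine gap in your plan.
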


\begin{fact} [Wilson \cite{W}] \label{Fact5} Every simple pseudofinite group is elementarily equivalent to an ultraproduct of finite simple groups.
\end{fact}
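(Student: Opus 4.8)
The plan is, starting from any presentation $G\equiv\prod_{i\in I}G_i/\mathcal U$ of the given simple pseudofinite group with all $G_i$ \emph{finite}, to shrink the index set along $\mathcal U$ (in the sense of Remark~\ref{wlog}) until every $G_i$ is finite \emph{simple}. First, $G$ is infinite, hence non-abelian, since an infinite abelian group is never simple. By Fact~\ref{Fact3}(a), $G\models\sigma$ for the sentence $\sigma$ of Definition~\ref{sigma}, so applying \los{} to $\sigma$ (and to ``$x\ne1$'') we may assume that every $G_i$ is non-trivial and satisfies $\sigma$; then Fact~\ref{Fact3}(b) gives that $\soc(G_i)$ is a non-abelian simple group for every $i$.

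Replacing $G_i$ by $\soc(G_i)$ would not in general preserve elementary equivalence, since $[G_i:\soc(G_i)]$ need not be bounded over $i$; instead I would force the $G_i$ themselves to be perfect, by producing a first-order sentence asserting a \emph{uniform} commutator bound. Let $k$ be the integer of Fact~\ref{Fact2}; then every element of $\soc(G_i)$ is a product of $k$ commutators, so by Fact~\ref{Fact4} every element of $G_i'$ is a product of $k+3$ commutators. Put $H:=\prod_i G_i/\mathcal U$ and let $\psi(x)$ be the formula $\exists a_1,b_1,\dots,a_{k+3},b_{k+3}\ \bigl(x=[a_1,b_1]\cdots[a_{k+3},b_{k+3}]\bigr)$. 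A coordinatewise argument, using the uniform bound $k+3$ to lift commutator factorizations from the $G_i$ to $H$, shows $\psi(H)=H'$; in particular $\psi$ defines a normal subgroup of $H$. Now $G$, being simple, satisfies the single first-order sentence ``if $\psi$ defines a normal subgroup, then it is trivial or the whole group'', hence so does $H$; since $\psi(H)=H'\ne1$ (because $H\equiv G$ is non-abelian), we get $\psi(H)=H$, i.e.\ $H\models\tau$ where $\tau$ is the sentence $\forall x\,\psi(x)$.

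Finally, $G\equiv H$ gives $G\models\sigma\wedge\tau$, so one further application of \los{} lets us assume that every $G_i$ satisfies $\sigma\wedge\tau$. For such a finite $G_i$, the sentence $\tau$ forces $G_i=G_i'$, while $\sigma$ forces (Fact~\ref{Fact3}(b)) that $\soc(G_i)$ is non-abelian simple; the contrapositive of the first assertion of Fact~\ref{Fact4} then gives that $G_i$ is simple. Thus $G\equiv\prod_{i\in I}G_i/\mathcal U$ with every $G_i$ finite simple, as required. The only non-routine point is the second paragraph: ``$G$ is perfect'' is not visibly first-order, and it is the combination of Facts~\ref{Fact2} and~\ref{Fact4} that converts it into the genuinely first-order statement $\tau$ about a bounded number of commutators. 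The same argument, incidentally, goes through verbatim with ``simple'' weakened to ``non-abelian definably simple''.
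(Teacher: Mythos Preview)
Your argument is correct and follows essentially the same route as the paper (which reproduces Wilson's proof in Lemma~\ref{Lemma1}(b)): use Felgner's sentence $\sigma$ to ensure each $G_i$ has non-abelian simple socle, and then use the uniform commutator bound from Facts~\ref{Fact2} and~\ref{Fact4} to convert perfection into a first-order property and force the $G_i$ to be simple. Your presentation is slightly more explicit---you introduce the auxiliary sentence $\tau$ and spell out $\psi(H)=H'$---whereas the paper argues by contradiction (if almost all $G_i$ were non-simple, $\psi$ would define a proper normal subgroup in $G$), but this is merely the contrapositive of your step and not a different idea; your closing remark that the proof works for non-abelian definably simple $G$ is exactly the content of Lemma~\ref{Lemma1}(b).
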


In the following lemma, we observe that Wilson's proofs for Fact~\ref{Fact3}$(a)$  and Fact~\ref{Fact5} work for non-abelian definably simple pseudofinite groups.

\begin{lemma}
\label{Lemma1} If $G$ is a non-abelian definably simple pseudofinite group and $\sigma$ is the sentence from Definition \ref{sigma}, then the following statements hold.
\begin{enumerate}
\item $G \models \sigma$.
\item $G\equiv\prod_{i \in I} G_i / \mathcal{U}$ where each $G_i$ is a non-abelian finite simple group and $\mathcal{U}$ is a non-principal ultrafilter on a set $I$.
\end{enumerate}
\end{lemma}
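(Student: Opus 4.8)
The plan is to mirror Wilson's arguments for Fact~\ref{Fact3}$(a)$ and Fact~\ref{Fact5}, checking that at no point does one need the full strength of simplicity rather than definable simplicity of $G$. For part $(a)$, I would argue by contradiction: suppose $G \models \neg\sigma$, so there exist $x \neq 1$ and $y$ in $G$ with $H := C_G(x,y) \neq 1$ and $N := \bigcap_{g\in G}\bigl(H\, C_G(H)\bigr)^g \neq 1$. The key observation is that $N$ is a definable subgroup of $G$ (by Fact~\ref{cdim}, centralizers are definable, and one checks the intersection over conjugates is definable via a single first-order formula --- this uses pseudofiniteness only through the ambient language of groups) and is normal in $G$ by construction. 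Since $1 \neq x \in H \subseteq H\,C_G(H)$, in fact $x \in N$ would follow if $x$ were central in $H\,C_G(H)$; more carefully, one should follow Wilson's computation showing $N \neq 1$ and $N \neq G$. The inequality $N \neq G$ is where one argues that $H\,C_G(H)$ is a proper subgroup (since $y \in H$ but, for a suitable choice exploiting non-abelianness, $H\,C_G(H) \neq G$), and then $N \leq H\,C_G(H) \subsetneq G$. Definable simplicity of $G$ then forces $N = 1$, a contradiction. The main point to verify is simply that every set occurring in this argument is definable, so that definable simplicity can be invoked exactly where Wilson invokes simplicity.

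For part $(b)$, by the remark after \los\ (and the fact recalled in the text that any pseudofinite group is elementarily equivalent to a non-principal ultraproduct of finite groups), write $G \equiv \prod_{i\in I} G_i/\mathcal{U}$ with each $G_i$ finite. By part $(a)$, $G \models \sigma$, so by \los\ the set $J = \{\, i \in I : G_i \models \sigma \,\}$ lies in $\mathcal{U}$; replacing $I$ by $J$ and $\mathcal{U}$ by $\mathcal{U}_J$ as in Remark~\ref{wlog}, we may assume every $G_i \models \sigma$. By Fact~\ref{Fact3}$(b)$, $S_i := \soc(G_i)$ is then a non-abelian finite simple group for each $i$. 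It remains to show $G \equiv \prod_{i\in I} S_i/\mathcal{U}$. Here I would split into two cases according to whether $\{\, i : G_i = S_i\,\} \in \mathcal{U}$. If it is, we are done (after the usual reindexing). If not, then $\{\, i : G_i \neq S_i\,\} \in \mathcal{U}$, and since $S_i$ is a non-abelian simple socle, Fact~\ref{Fact4} applies: $G_i' \neq G_i$, and moreover (combining Fact~\ref{Fact2}, which gives a uniform bound $k$ on commutator width in finite non-abelian simple groups, with Fact~\ref{Fact4}) every element of $G_i'$ is a product of $k+3$ commutators. Passing to the ultraproduct, $\bar G := \prod G_i/\mathcal{U}$ has $\bar G' \neq \bar G$ — this is a first-order statement, namely "not every element is a product of $k+3$ commutators fails to hold for the whole group" — which I should phrase as: the formula asserting $\exists x\, (x \notin \{\text{products of } k{+}3 \text{ commutators}\})$ holds in $\bar G$. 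But $G \equiv \bar G$ and $G$ is definably simple, so $\bar G$ is definably simple, whence $\bar G' = \bar G$ since $\bar G'$ is a definable (indeed, $(k+3)$-commutator-width-definable) normal subgroup and $\bar G$ is non-abelian. This contradiction rules out the second case, so $G \equiv \prod_{i\in I} S_i/\mathcal{U}$ with the $S_i$ non-abelian finite simple, as required.

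The step I expect to be the main obstacle is the definability bookkeeping in part $(a)$: one must be careful that the subgroup $N = \bigcap_{g\in G}(H\,C_G(H))^g$ — an \emph{a priori} infinite intersection — is definable. This works because $H\,C_G(H)$ is defined (with parameters $x,y$) by a first-order formula $\varphi(z;x,y)$, and then $z \in N$ iff $\forall g\, \varphi(g^{-1}zg;x,y)$, which is again first-order; so $N$ is definable and definable simplicity applies verbatim. The second mild subtlety is ensuring, in part $(b)$, that $\bar G'$ is uniformly definable across the cases so that "$\bar G$ definably simple $\Rightarrow \bar G' = \bar G$" is legitimate — this is exactly what the commutator-width bound $k+3$ from Facts~\ref{Fact2} and~\ref{Fact4} buys us. Everything else is a transcription of Wilson's argument with "simple" weakened to "definably simple", together with routine applications of \los\ and Remark~\ref{wlog}.
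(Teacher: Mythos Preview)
Your treatment of part~$(b)$ is essentially the paper's argument: once every $G_i \models \sigma$ and $\soc(G_i)$ is non-abelian simple, the uniform commutator-width bound $k+3$ from Facts~\ref{Fact2} and~\ref{Fact4} makes the set of products of $k+3$ commutators a definable proper normal subgroup in each non-simple $G_i$, and \los\ transfers this to a definable proper normal subgroup of $G$, contradicting definable simplicity. (A small quibble: you should speak of ``the set defined by the $(k{+}3)$-commutator formula'' rather than $\bar G'$, since the abstract derived subgroup of an ultraproduct is not obviously definable; what matters is that this \emph{set} is a proper normal subgroup, which follows from Fact~\ref{Fact4}.)

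Part~$(a)$, however, has a genuine gap. You set up $N = \bigcap_{g\in G}(H\,C_G(H))^g$ correctly and note it is definable and normal, but then you aim to show $N \neq G$ directly, writing that ``for a suitable choice exploiting non-abelianness, $H\,C_G(H) \neq G$''. There is no choice available: $x$ and $y$ are handed to you by $\neg\sigma$, and you give no argument that $H\,C_G(H)$ is proper. The paper's route is the opposite one and is what actually closes the argument: from $N \neq 1$ and definable simplicity conclude $N = G$; then $H\,C_G(H) = G$, and since both $H$ and $C_G(H)$ normalize $H$ this forces $H \trianglelefteq G$; definable simplicity again gives $H = C_G(x,y) = G$, so $x \in Z(G)$, contradicting that a non-abelian definably simple group has trivial centre. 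Your outline never reaches this chain of implications, and the vague appeal to ``Wilson's computation showing $N \neq G$'' does not substitute for it.

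One further minor point: you invoke Fact~\ref{cdim} to justify definability of centralizers, but that fact is stated for groups of finite Morley rank, which $G$ is not assumed to be in this lemma. The definability you need is elementary anyway: $C_G(x,y)$ is definable because $\{x,y\}$ is finite, $C_G(H)$ is definable by a universal formula over the formula defining $H$, and $N$ is then definable by one more universal quantifier, exactly as you say in your final paragraph.
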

\begin{proof}

$(a)$ Assume that $\sigma$ does not hold in $G$. Then, there are $x, y \in G
$ such that  $x \neq 1$ and $C_G(x,y) \neq 1$ and
\[
N := \bigcap_{g\in G}(C_G(x,y)C_G(C_G(x,y)))^g \neq 1.
\]
Clearly, $N$ is a definable normal subgroup of $G$.
Since $G$ is definably simple and $N$ is non-trivial by our assumption, we get $N=G$. As a result we have,
$C_G(x,y)C_G(C_G(x,y)) = G$. Therefore, $C_G(x,y)$ is normalized by $G$. Since $C_G(x,y)
$ is a non-trivial definable subgroup of $G$, we have $C_G(x,y)=G$. This is a contradiction, since
 $G$ can not have central elements as a non-abelian definably simple group.

$(b)$ As a non-abelian pseudofinite group,   $G$ is elementarily equivalent to $\prod_{i \in I} G_i/\mathcal{U}$ where each $G_i$ is a non-abelian
finite group and $\mathcal{U}$ is a non-principal ultrafilter on a set $I$.
Moreover, since $G\models \sigma$ by part~$(a)$, we may assume that $G_i\models \sigma$ for all $i\in I$ by \L o\'{s}'s Theorem and Remark~\ref{wlog}.   It follows by Fact~\ref{Fact3}$(b)$ that $\soc(G_i)$ is a
non-abelian simple group for each $i \in I$. Now, let $\varphi_i$ be a formula in the language of groups defining the set of products of $k+3$ commutators where $k$ is the integer given by Fact~\ref{Fact2}. By Fact~\ref{Fact4}, for any non-simple $G_i$, the formula $\varphi_i$ defines a proper normal subgroup. Therefore, if $G_i$ is not simple for almost all $i\in I$, then $G$ has a proper definable normal subgroup by \L o\'{s}'s Theorem. This is not possible as $G$ is definably simple and hence $G_i$ is a non-abelian
finite simple group for almost all $i\in I$. Again, by referring to Remark~\ref{wlog}, we can conclude that
$G\equiv \prod_{i \in I} G_i/\mathcal{U}$ where $G_i$ is a non-abelian finite simple group for all $i\in I$.
\end{proof}

\begin{corollary}
Every definably simple pseudofinite group is elementarily equivalent to an
ultraproduct of finite simple groups.
\end{corollary}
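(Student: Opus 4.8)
The plan is to split into the abelian and non-abelian cases. If $G$ is an abelian definably simple pseudofinite group, then by Fact~\ref{abelian} we have $G \equiv (\mathbb{Q},+) \equiv \prod_{p \in I} C_p/\mathcal{U}$, where $I$ is the set of primes, each $C_p$ is a (simple) cyclic group of prime order, and $\mathcal{U}$ is a non-principal ultrafilter on $I$; this exhibits $G$ as elementarily equivalent to an ultraproduct of finite simple groups. If $G$ is non-abelian, then Lemma~\ref{Lemma1}$(b)$ gives $G \equiv \prod_{i \in I} G_i/\mathcal{U}$ with each $G_i$ a non-abelian finite simple group, which again is an ultraproduct of finite simple groups. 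Since every definably simple group is either abelian or non-abelian, the two cases are exhaustive and the corollary follows.

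The only point requiring a word of care is that the \emph{abelian} case really does produce an ultraproduct of finite simple groups, not merely finite groups: this is why one invokes the explicit description $\prod_{p\in I} C_p/\mathcal{U}$ from part~$(c)$ of Fact~\ref{abelian} rather than just ``$A \equiv (\mathbb{Q},+)$'', since the cyclic groups of prime order are simple. With that observation the proof is a one-line case split; no further obstacle is expected, as all the substantive content has already been established in Fact~\ref{abelian} and Lemma~\ref{Lemma1}.
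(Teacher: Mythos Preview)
Your proof is correct and follows exactly the same approach as the paper's own proof, which simply reads ``Follows from Lemma~\ref{Lemma1}$(b)$ and Fact~\ref{abelian}.'' You have merely spelled out the intended case split in more detail, including the useful observation that the $C_p$ in Fact~\ref{abelian}$(c)$ are indeed simple.
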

\begin{proof}
Follows from  Lemma~\ref{Lemma1}$(b)$ and Fact~\ref{abelian}.
\end{proof}

Now, we can prove our classification result.

\begin{prop} \label{Theorem1} Let $G$ be a non-abelian definably simple pseudofinite
group of finite centralizer dimension. Then $G$ is isomorphic to a (twisted) Chevalley
group over a pseudofinite field.
\end{prop}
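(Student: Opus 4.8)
The plan is to combine Lemma~\ref{Lemma1} with the hypothesis of finite centralizer dimension to pin down the ultraproduct occurring in Lemma~\ref{Lemma1}$(b)$, and then upgrade elementary equivalence to isomorphism using Ryten's results. By Lemma~\ref{Lemma1}$(b)$ we have $G\equiv\prod_{i\in I}G_i/\mathcal{U}$ with each $G_i$ a non-abelian finite simple group and $\mathcal{U}$ non-principal. First I would argue that, since having centralizer dimension $\leqslant k$ is a first order property in the language of groups (see \cite{duncan}, \cite{len}) and $G$ has some finite centralizer dimension $k$, \los{} forces $\{\,i\in I : \operatorname{cdim}(G_i)\leqslant k\,\}\in\mathcal{U}$; passing to this set and invoking Remark~\ref{wlog} we may assume $\operatorname{cdim}(G_i)\leqslant k$ for all $i\in I$. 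The point of this reduction is that a family of finite simple groups of bounded centralizer dimension cannot contain infinitely many alternating groups $\operatorname{Alt}(n)$ (their centralizer dimension grows with $n$) nor members from infinitely many distinct families of groups of Lie type; more precisely, by the classification of finite simple groups the $G_i$ in $\mathcal{U}$ must, after shrinking $I$ again, all be of the form $X(\mathbb{F}_{q_i})$ for one fixed Lie type $X$ (twisted or untwisted) and one fixed Lie rank, with the $q_i$ varying. (The sporadic groups are finite in number, hence excluded from a non-principal ultrafilter, and the alternating groups are excluded by the cdim bound.)

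Granting that reduction, Fact~\ref{Fact1} applies: $\prod_{i\in I}X(\mathbb{F}_{q_i})/\mathcal{U}\cong X\bigl(\prod_{i\in I}\mathbb{F}_{q_i}/\mathcal{U}\bigr)$, and $\prod_{i\in I}\mathbb{F}_{q_i}/\mathcal{U}$ is a pseudofinite field $F$ by Ax's theorem. Hence $G\equiv X(F)$ for a (twisted) Chevalley group over the pseudofinite field $F$; this is exactly the statement of Wilson's theorem (Fact~\ref{wilsontheorem}) re-derived in our setting, but now with the extra information, coming from finite centralizer dimension, that controls which type $X$ can occur. The final step is to replace $\equiv$ by $\cong$: by Ryten's thesis \cite{ryten} (which, as recalled after Fact~\ref{wilsontheorem}, strengthens Wilson's classification for both Chevalley and twisted Chevalley groups so as to confirm Felgner's conjecture), a group elementarily equivalent to a (twisted) Chevalley group over a pseudofinite field is in fact isomorphic to a (twisted) Chevalley group over a pseudofinite field. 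Applying this to $G$ yields the conclusion.

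The main obstacle is the reduction step in the first paragraph: squeezing out of ``finite centralizer dimension plus CFSG'' the statement that only one Lie type of bounded rank survives in the ultrafilter. One has to know (or prove) that $\operatorname{cdim}$ is unbounded along the alternating groups and along any family of groups of Lie type of unbounded rank, while being bounded for a fixed type as the field varies. The boundedness for fixed type and varying field is the essential input and is precisely where the hypothesis is used; the unboundedness statements are where one leans on structural facts about centralizers in finite simple groups. I would treat this carefully, possibly isolating it as a separate lemma, and would be explicit that the classification of finite simple groups is invoked here. Everything after that reduction is a routine concatenation of Fact~\ref{Fact1}, Ax's characterization of pseudofinite fields, and Ryten's rigidity result, so no further difficulty is expected.
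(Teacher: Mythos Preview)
Your proposal is correct and follows essentially the same route as the paper: reduce via Lemma~\ref{Lemma1}$(b)$ to an ultraproduct of non-abelian finite simple groups, use finite centralizer dimension to rule out sporadics and alternating groups and to bound the Lie rank, then apply Fact~\ref{Fact1} and Ryten's results. For the step you flag as the main obstacle, the paper exhibits an explicit centralizer chain giving $\cdim(\alt(n))>n/4-1$ and, for the groups of Lie type, cites \cite[Proposition~3.1]{tent} for the fact that unbounded Lie rank forces unbounded centralizer dimension.
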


\begin{proof}
By Lemma~\ref{Lemma1}$(b)$, we have $G\equiv \prod_{i \in I} G_i/\mathcal{U}$ where each $G_i$ is a non-abelian finite simple group. Since there are three families of non-abelian finite simple groups, without
loss of generality, we assume that just one family occurs in the
ultraproduct (see Remark~\ref{wlog}).  The possibilities are analyzed below.

\begin{case}
\emph{Sporadic Groups}

Since there are finitely many sporadic groups,
we may assume that all $G_i$'s are the same sporadic group $H$, that is,  $G \equiv H^I/\mathcal{U}$. However, this forces $G$ to be finite which is not the case.
\end{case}

\begin{case}
\emph{Alternating Groups}

It is well-known that the centralizer dimension of alternating groups increases as the rank increases. More precisely,  if we consider the centralizer chain of the form  $$C(1) > C((12)(34))> \cdots > C((12)(34), \ldots,
(k-3, k-2)(k-1, k)),$$ we can observe that the centralizer dimension of $\alt(n)$ is greater than  $\frac{n}{4}-1$. Therefore, the finite centralizer dimension property is satisfied only if there is a bound on the orders of the alternating groups in the ultraproduct. But then,  the ultraproduct is finite and this case is eliminated as well.
\end{case}

\begin{case}
\emph{Groups of Lie type}

In this case, all $G_i$'s are from one of the infinite families of classical
groups $A_n, B_n, C_n, D_n, {^2A_n}, {^2D_n}$. In the proof of \cite[Proposition 3.1]{tent}, it is shown that if there is no bound on the Lie ranks of the groups in the ultraproduct, then the centralizer dimension can not be finite.

Therefore,  Lie ranks are bounded in the ultraproduct and we
may assume that all $G_i$'s are the groups of the same Lie type with the fixed Lie
rank $n$ and over fields of increasing order.
\end{case}
We conclude that $G$ is elementarily equivalent to a (twisted) Chevalley  group over a pseudofinite field by Fact~\ref{Fact1}. Moreover, we can strengthen the elementary equivalence to an isomorphism by the results obtained by  Ryten in \cite{ryten}.
\end{proof}
\begin{corollary}
Every non-abelian definably simple pseudofinite group of finite centralizer dimension is simple.
\end{corollary}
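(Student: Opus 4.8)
The plan is to read this off directly from Proposition~\ref{Theorem1} together with the description, recalled in Section~1, of precisely when a (twisted) Chevalley group fails to be simple as an abstract group. First I would apply Proposition~\ref{Theorem1} to obtain an isomorphism $G\cong X(K)$, where $X$ is one of the (possibly twisted) Chevalley types and $K$ is a pseudofinite field. Since a pseudofinite field is by definition an infinite model of the theory of finite fields, $K$ is infinite.

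Next I would invoke the fact, stated in the preliminaries, that $X(K)$ is simple as an abstract group for every field $K$, with the sole exceptions ${^2A_2}(\mathbb F_4)$, ${^2B_2}(\mathbb F_2)$, ${^2F_4}(\mathbb F_2)$ and ${^2G_2}(\mathbb F_3)$; each of these four exceptional groups is defined over a finite field (of order $4$, $2$, $2$ and $3$ respectively). As $K$ is infinite, $X(K)$ is none of these four groups, hence $X(K)$ is simple, and therefore so is $G$.

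I do not expect any genuine obstacle here: the substantive work is already contained in Proposition~\ref{Theorem1} (which in turn rests on Wilson's and Ryten's results), and what remains is only the observation that every group on the resulting list is simple once the small-field exceptions are discarded. The single point that needs to be made explicit is that pseudofiniteness of the coordinatizing field forces that field to be infinite, which rules out the four exceptional configurations.
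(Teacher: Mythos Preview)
Your proposal is correct and is exactly the argument the paper has in mind: the corollary is stated without proof, so it is meant to follow immediately from Proposition~\ref{Theorem1} together with the simplicity statement for (twisted) Chevalley groups recorded in Section~1, and your observation that a pseudofinite field is infinite rules out the finitely many small-field exceptions.
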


\section{Proof of the main result}

In this section, we will prove the following theorem.
\begin{theorem}\label{Theorem2}
 Let $G$ be an infinite simple group of finite Morley rank and $\alpha$ be an automorphism of $G$ such that the definable hull of $C_G(\alpha)$ is $G$. If $C_G(\alpha)$ is pseudofinite, then there is a definable (in $C_G(\alpha)$) normal subgroup $S$ of $C_G(\alpha)$ such that  $S$ is isomorphic to a (twisted) Chevalley  group over a pseudofinite field and $C_G(\alpha)$ embeds in   $\aut(S)$.
\end{theorem}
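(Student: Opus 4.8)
The plan is to transfer the pseudofiniteness of $C_G(\alpha)$ into a structural statement by exploiting that $G$, being a group of finite Morley rank, has finite centralizer dimension (Fact~\ref{cdim}), and that this property passes to the subgroup $C_G(\alpha)$ since the class of groups of finite centralizer dimension is closed under subgroups. So $C:=C_G(\alpha)$ is a pseudofinite group of finite centralizer dimension. The first main step is to locate inside $C$ a non-abelian definably simple pseudofinite subgroup of finite centralizer dimension, to which Proposition~\ref{Theorem1} can be applied. The natural candidate is a minimal normal subgroup, or more precisely a suitable piece of $\soc(C)$: since $C$ satisfies the sentence $\sigma$ (this should follow by the same argument as in Lemma~\ref{Lemma1}$(a)$, using the definable simplicity features inherited from $G$, or directly from the fact that $C$ is elementarily equivalent to an ultraproduct of finite groups satisfying $\sigma$), one expects $\soc(C)$, or a normal factor of it, to be a non-abelian definably simple pseudofinite group. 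Finite centralizer dimension forbids an infinite direct product of such factors, so $\soc(C)$ has only finitely many non-abelian simple-type components; passing to one of them (and using that the $G_i$ in the ultraproduct decomposition may be taken from a single family, Remark~\ref{wlog}) yields a single definable normal subgroup $S_0 \trianglelefteq C$ that is non-abelian definably simple pseudofinite of finite centralizer dimension.

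Applying Proposition~\ref{Theorem1} to $S_0$ gives that $S_0$ is isomorphic to a (twisted) Chevalley group over a pseudofinite field; set $S:=S_0$. The second main step is to show $C$ embeds in $\aut(S)$. Here the idea is standard: $C$ acts on its normal subgroup $S$ by conjugation, giving a homomorphism $C \to \aut(S)$ whose kernel is $C_C(S)$. One then argues $C_C(S)=1$. This is where the hypothesis that the definable hull of $C$ is all of $G$ enters decisively: if $C_C(S)$ were nontrivial, one would want to derive a contradiction with the simplicity of $G$. Concretely, $S$ is definable in $C$, hence $C_C(S)$ is definable in $C$; one should relate this to a proper definable subgroup or a proper definable normal structure in $G$ via the definable hull, contradicting that $G$ is simple of finite Morley rank (and in particular has no proper definable normal subgroups, and no infinite definable pieces that centralize a large subgroup). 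The cleanest route is probably to show that $\defi_G(S)$ is a normal (being the definable hull of something normalized by $C$, whose definable hull is $G$) definable subgroup of $G$, hence all of $G$, and then that $C_C(S)$ centralizing $S$ forces it to centralize $\defi_G(S)=G$, i.e.\ $C_C(S)\le Z(G)=1$.

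The remaining point is the normality and definability of $S$ inside $C$, which we already have by construction (it is a definable normal subgroup of $C$), so the theorem follows: $S\trianglelefteq C$ is definable in $C$, isomorphic to a (twisted) Chevalley group over a pseudofinite field, and $C=C_G(\alpha)$ embeds in $\aut(S)$ via conjugation.

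I expect the main obstacle to be the verification that the relevant normal subgroup of $C$ is \emph{definably simple} (not merely that $\soc(C)$ is, in the ultraproduct, built from finite simple groups) together with controlling $C_C(S)$ through the definable-hull hypothesis. The first difficulty is that passing from "$\soc$ of an ultraproduct of finite simple groups" to "a definable normal subgroup of $C$ that is itself definably simple" requires care about which subgroup is actually first-order definable in $C$ and uniformly so across the ultraproduct — this is exactly the kind of bookkeeping (definable in a single family, bounded Lie rank, Remark~\ref{wlog}) that made Lemma~\ref{Lemma1} and Proposition~\ref{Theorem1} work, and it must be redone here without assuming $C$ itself is definably simple. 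The second difficulty, ruling out $C_C(S)\ne 1$, is delicate because $C_C(S)$ is a priori only definable \emph{in $C$}, not in $G$; bridging this gap is precisely what the hypothesis $\defi_G(C_G(\alpha))=G$ is for, and making that bridge rigorous — showing that a nontrivial centralizer inside the fixed-point group propagates, via definable hulls, to a nontrivial center or proper normal definable subgroup of $G$ — is the heart of the argument.
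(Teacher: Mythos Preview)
Your overall architecture is right and matches the paper: locate a definable normal subgroup $S\trianglelefteq C_G(\alpha)$ that is (isomorphic to) a (twisted) Chevalley group over a pseudofinite field, then kill $C_{C_G(\alpha)}(S)$ using the hypothesis $\defi_G(C_G(\alpha))=G$. Your treatment of the second step is essentially the paper's Lemma~\ref{dense}: since $S$ is normal in $C_G(\alpha)$ and $\defi_G(C_G(\alpha))=G$, one gets $\defi_G(S)=G$; and since centralizers in $G$ are definable, $S\subseteq C_G(x)$ implies $G=\defi_G(S)\subseteq C_G(x)$, so $C_G(S)\leqslant Z(G)=1$. That part is fine.

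The genuine gap is in the first step, and it is exactly the obstacle you flag but do not resolve. You propose to look at $\soc(C_G(\alpha))$ or a component of it; but $C_G(\alpha)$ is infinite, there is no reason for minimal normal subgroups to exist, and even if they did there is no reason for them to be \emph{definable}. The paper never works with $\soc(C_G(\alpha))$. Instead it writes $C_G(\alpha)\equiv\prod_i G_i/\mathcal{U}$, transfers the conclusion of Lemma~\ref{dense}$(b)$ to the $G_i$ via a first-order sentence (so each $G_i$ has trivial centralizer of every nontrivial normal subgroup, hence a unique minimal normal subgroup $M_i$), and then argues that each $M_i$ is in fact a single non-abelian simple group rather than a direct product of conjugates. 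This last reduction is not automatic from finite centralizer dimension alone; it uses a family of sentences $\tau_{n,k}$ (Definition~\ref{generalization}, Lemma~\ref{sentence}, Lemma~\ref{unique}) expressing that no commuting product of nontrivial double centralizers is normal, which hold in $C_G(\alpha)$ again by Lemma~\ref{dense}$(b)$. Your appeal to the Felgner sentence $\sigma$ is not what the paper uses here and would not by itself force $\soc(G_i)$ to be simple.

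The decisive missing idea is how to make $(\soc(G_i))_{\mathcal U}$ \emph{definable} in $(G_i)_{\mathcal U}$, so that a corresponding $S$ exists in $C_G(\alpha)$. Bounding the Lie rank (your Remark~\ref{wlog} bookkeeping) lets one apply Proposition~\ref{Theorem1} to identify $(\soc(G_i))_{\mathcal U}$ as a Chevalley group, but it does \emph{not} give a uniform formula defining $\soc(G_i)$ inside $G_i$. The paper supplies this via Thompson's conjecture (proved by Ellers--Gordeev for finite simple groups of Lie type over fields with more than $8$ elements): for almost all $i$ there is $r_i\in M_i$ with $M_i=r_i^{G_i}r_i^{G_i}$, so a single formula $\exists u\exists v\,(x=r^u r^v)$ with parameter $r$ defines $M_i$ in $G_i$, and hence $(\soc(G_i))_{\mathcal U}$ is definable in $(G_i)_{\mathcal U}$. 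Without this (or an equivalent uniform definability result), your construction of $S$ does not go through.
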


 Recall that, $C_G(\alpha )$ denotes the group of fixed points of $\alpha$ in $G$, that is, $C_G(\alpha) = \left\{g\in G ~|~ \alpha(g) = g \right\}$.  Moreover,  we would like to  introduce a simplified notation for  the  double centralizers since they will play an important role in our arguments. For any group $H$ and a subset $X$ of $H$, we will denote the the double centralizer $C_H(C_H(X))$ by $C^2_H(X)$ or even by $C^2(X)$ when the group in concern is specified clearly. Similarly the triple centralizer of $X$ in $H$ will be denoted by $C_H^3(X)$ or $C^3(X)$. It can be easily checked that $X\subseteq C_H^2(X)$ and  $C_H^3(X)=C_H(X)$.

We fix $G$ and $\alpha$ as in the Theorem~\ref{Theorem2} from now on.
\begin{remark}
It can be observed that simple groups of finite Morley rank of degenerate type, satisfying the assumptions of the Theorem~\ref{Theorem2}, do not exist:
It is known that simple groups of finite Morley rank of degenerate type do not have involutions \cite[IV, Theorem 4.1]{abc}. Therefore, $C_G(\alpha)$ has no involutions. Since this is a first order property, $C_G(\alpha)$ is an ultraproduct of finite groups of odd orders which are solvable by Thompson's odd order theorem. Evgenii I. Khukro proved in \cite{khukro} that such pseudofinite groups of finite centralizer dimension are solvable. However, then $G$ is solvable as the definable hull of $C_G(\alpha)$ \cite[I, Lemma 2.15]{abc}. This is not possible since $G$ is simple.
\end{remark}


\begin{lemma} \label{dense} Let $H$ be a  non-trivial subnormal subgroup of $C_G(\alpha)$. Then the following statements hold.
\begin{enumerate}
  \item  $\defi(H)=G$.
  \item $C_G(H) = 1$.
\end{enumerate}
\end{lemma}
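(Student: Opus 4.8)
The plan is to prove (a) first and then derive (b) from it. For (a), the key observation is that $\defi(H)$ is a definable subgroup of $G$ which is normalized by $\defi(C_G(\alpha)) = G$, hence $\defi(H)$ is a definable \emph{normal} subgroup of the simple group $G$; so $\defi(H)$ is either $1$ or $G$, and since $H$ is non-trivial we get $\defi(H)=G$. To make this rigorous I need two ingredients. First, if $H$ is subnormal in $C_G(\alpha)$, say $H = H_0 \trianglelefteq H_1 \trianglelefteq \cdots \trianglelefteq H_n = C_G(\alpha)$, then $\defi(H)$ should be normalized by $C_G(\alpha)$; this follows by an easy induction, using the standard fact that if $A \trianglelefteq B$ then $\defi(A) \trianglelefteq \defi(B)$ (conjugation by an element of $B$ is an automorphism of $B$ sending $A$ to itself, hence sending $\defi(A)$ to a definable subgroup containing $A$, which by minimality equals $\defi(A)$); iterating, $\defi(H) \trianglelefteq \defi(C_G(\alpha)) = G$. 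Second, I use the hypothesis $\defi(C_G(\alpha)) = G$ together with the fact that the normalizer in $G$ of a definable subgroup is definable (it is $\{g : g\defi(H)g^{-1} = \defi(H)\}$, cut out by a first-order formula): since $C_G(\alpha)$ normalizes $\defi(H)$, so does its definable hull $G$.

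For (b), $C_G(H)$ is a definable subgroup of $G$ by Fact \ref{cdim}. Moreover $H$ normalizes $C_G(H)$, and since $H$ is subnormal in $C_G(\alpha)$ one can argue (again by the normalizer being definable, now working inside $G$, or directly) that $\defi(H) = G$ normalizes $C_G(H)$; so $C_G(H)$ is a definable normal subgroup of $G$, hence $1$ or $G$. If $C_G(H) = G$ then $H \leqslant Z(G) = 1$, contradicting $H \neq 1$. Therefore $C_G(H) = 1$. Alternatively, and perhaps more cleanly: $C_G(H) = C_G(\defi(H)) = C_G(G) = Z(G) = 1$, where the first equality holds because centralizing a set is the same as centralizing the subgroup it generates and the same as centralizing its definable hull (centralizers of definable subgroups are definable and an element centralizing $H$ centralizes the smallest definable subgroup it can be trapped in). I would present this second route, citing part (a).

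The main obstacle I anticipate is purely bookkeeping: making the subnormality-to-definable-hull passage airtight, i.e.\ verifying that $A \trianglelefteq B \Rightarrow \defi(A) \trianglelefteq \defi(B)$ and then running the induction up the subnormal series. This is standard in the finite Morley rank setting but does need the descending chain condition (to have definable hulls at all) and the Baldwin--Saxl consequence that centralizers are definable. Everything else — simplicity of $G$, definability of normalizers, triviality of the center of a simple group — is immediate. I would keep the write-up to a short paragraph for each part, explicitly invoking $\defi(C_G(\alpha)) = G$ at the one place it is needed.
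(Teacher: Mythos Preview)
Your approach is essentially the paper's: pass to definable hulls along the subnormal series using $A \trianglelefteq B \Rightarrow \defi(A) \trianglelefteq \defi(B)$, then invoke simplicity of $G$; for (b) the paper runs exactly your ``route 2'', phrased as $G = \defi(H) \leqslant C_G^2(H)$, whence $C_G(H) \leqslant Z(G) = 1$. One wording caution for (a): iterating $A \trianglelefteq B \Rightarrow \defi(A) \trianglelefteq \defi(B)$ along the chain only gives that $\defi(H)$ is \emph{subnormal} in $G$, not normal --- you then need the top-down induction (simplicity forces $\defi(H_{n-1}) = G$, hence $\defi(H_{n-2}) \trianglelefteq G$ so $\defi(H_{n-2}) = G$, and so on), which is precisely how the paper finishes; your ``second ingredient'' about normalizers being definable is then unnecessary.
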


\begin{proof}

$(a)$ Since $H$ is a subnormal subgroup  of $C_G(\alpha)$ there is a finite chain of subgroups of $C_G(\alpha)$ such that
 $$1\neq H \trianglelefteqslant H_1 \trianglelefteqslant \cdots \trianglelefteqslant H_{n-1}\trianglelefteqslant C_G(\alpha).$$
 Basic properties of definable hull (see \cite[Lemma 5.35]{bn}) ensure that if we take the definable hulls of each subgroup in the chain,  we get the following subnormal chain
 \[1\neq \defi(H) \trianglelefteqslant \defi(H_1) \trianglelefteqslant  \cdots \trianglelefteqslant  \defi(H_{n-1}) \trianglelefteqslant \defi(C_G(\alpha))=G.\]
 Since $G$ is simple, $\defi(H_{n-1})=G$ and inductively we get $\defi(H)=G$.

 $(b)$ Let $H$ be a non-trivial subnormal subgroup of $C_G(\alpha)$.  Since $H\leqslant C^2_G(H)$ and   $C^2_G(H)$ is a definable subgroup of $G$ (see Fact~\ref{cdim}), we have $\defi(H) \leqslant C^2_G(H)$. However, $\defi(H) = G$ by part (a) and we get $C_G(H) \leqslant Z(G)$. As a simple group, $G$ has trivial center and so,  $C_G(H)=1$.
\end{proof}

\begin{corollary}\label{nofinite}
There are no non-trivial subnormal subgroups of $C_G(\alpha)$ which are definable in $G$. In particular, $C_G(\alpha)$ has no non-trivial finite subnormal subgroups.
\end{corollary}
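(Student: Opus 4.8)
The plan is to derive the first assertion almost immediately from Lemma~\ref{dense}(a), together with the trivial remark that a subgroup of $G$ which is definable in $G$ coincides with its own definable hull. Concretely, I would argue by contradiction: let $H$ be a non-trivial subnormal subgroup of $C_G(\alpha)$ that happens to be definable in $G$. Then $H$ is a definable subgroup of $G$ containing $H$, so $\defi(H)=H$; but Lemma~\ref{dense}(a) gives $\defi(H)=G$, whence $H=G$. Since $H\leqslant C_G(\alpha)\leqslant G$, this forces $C_G(\alpha)=G$, i.e. $\alpha=\mathrm{id}_G$.

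The one point that genuinely needs an argument — and the step I expect to be the real obstacle — is ruling out $C_G(\alpha)=G$. If $C_G(\alpha)=G$, then $G$ itself is pseudofinite; but $G$ is an infinite simple group of finite Morley rank, hence non-abelian definably simple and, by Fact~\ref{cdim}, of finite centralizer dimension, so Proposition~\ref{Theorem1} would apply to $G$ and yield an isomorphism $G\cong X(K)$ with $K$ a pseudofinite field. As $K$ is interpretable in the pure group $X(K)\cong G$, it would then be an infinite field of finite Morley rank, hence algebraically closed by Macintyre's theorem — contradicting the fact that a pseudofinite field admits algebraic extensions of every finite degree. (Equivalently, one may simply cite that no infinite simple group of finite Morley rank is pseudofinite.) Thus $C_G(\alpha)\neq G$, the case $H=G$ is impossible, and the first assertion follows.

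For the ``in particular'' clause I would note that any non-trivial finite subgroup $H$ of $C_G(\alpha)$ is a finite, hence definable (with parameters), subset of $G$; so a non-trivial finite subnormal subgroup of $C_G(\alpha)$ would be a non-trivial subnormal subgroup of $C_G(\alpha)$ definable in $G$, which the first part has just excluded. No further work is needed here: all the content of the corollary is packed into Lemma~\ref{dense}(a) and the non-pseudofiniteness of $G$.
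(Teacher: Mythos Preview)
Your argument is correct and follows exactly the route the paper intends: the corollary is stated without proof as an immediate consequence of Lemma~\ref{dense}(a), and your derivation $H=\defi(H)=G$ for $H$ definable in $G$ is precisely that consequence. The ``in particular'' clause is handled just as you say.

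Where you go beyond the paper is in explicitly confronting the residual case $C_G(\alpha)=G$. The paper does not address it; your argument via Proposition~\ref{Theorem1}, interpretability of the base field in $X(K)$, and Macintyre's theorem is a valid way to close it. Note, however, that this extra step is only needed for the first assertion in full generality: for the ``in particular'' clause --- which is the only part actually invoked later, in Lemma~\ref{definable} --- the case $H=G$ is already impossible since $G$ is infinite while $H$ is finite. So the paper's tacit omission causes no trouble for its applications, but your version is the more complete one.
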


\begin{corollary}\label{cor2}
There are no non-trivial abelian subnormal subgroups of $C_G(\alpha)$.
\end{corollary}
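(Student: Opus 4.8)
The plan is to derive this immediately from part (b) of Lemma~\ref{dense}. Suppose, for contradiction, that $A$ is a non-trivial abelian subnormal subgroup of $C_G(\alpha)$. Since $A$ is abelian, every element of $A$ centralizes $A$, so $A \leqslant C_{C_G(\alpha)}(A) \leqslant C_G(A)$. On the other hand, $A$ is a non-trivial subnormal subgroup of $C_G(\alpha)$, so Lemma~\ref{dense}(b) applies and gives $C_G(A) = 1$. Combining the two inclusions forces $A = 1$, contradicting the assumption that $A$ is non-trivial.

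There is essentially no obstacle here: the only point to check is that the chain of subnormality for $A$ inside $C_G(\alpha)$ is exactly what Lemma~\ref{dense} requires, which is immediate from the definition of subnormal subgroup, and that $A \leqslant C_G(A)$ holds when $A$ is abelian, which is a triviality. So the corollary is a direct consequence of the centralizer computation already carried out in Lemma~\ref{dense}, with no further input needed.
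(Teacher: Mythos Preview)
Your proof is correct and is exactly the argument the paper intends: the corollary is stated without proof immediately after Lemma~\ref{dense}, and the only possible reading is the one you give, namely that an abelian subnormal $A$ satisfies $A \leqslant C_G(A) = 1$ by Lemma~\ref{dense}(b).
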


\begin{lemma}\label{fop}
$C_G(\alpha)$ is elementarily equivalent to an ultraproduct of finite groups such that in each finite group, the centralizer of any non-trivial normal subgroup is trivial.
\end{lemma}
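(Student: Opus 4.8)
The plan is to combine Lemma~\ref{Lemma1}(b) with the centralizer-triviality information we have already extracted for subnormal subgroups, and then push this through Łoś's theorem. Since $C_G(\alpha)$ is pseudofinite, it is elementarily equivalent to a non-principal ultraproduct $\prod_{i\in I} F_i/\mathcal{U}$ of finite groups. I want to arrange that each $F_i$ has the property that the centralizer of any non-trivial normal subgroup is trivial; the obstacle is that ``$C_F(N)=1$ for every non-trivial normal $N\trianglelefteqslant F$'' is a statement quantifying over all normal subgroups, which is not obviously first order. So the first step is to replace this by a genuine first-order condition that $C_G(\alpha)$ provably satisfies.

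The key observation is that a finite group $F$ has the desired property if and only if it satisfies the sentence
\[
\tau:\quad \forall x\ \Bigl[ x\neq 1 \rightarrow C_F\bigl(\langle\!\langle x\rangle\!\rangle_F\bigr)=1 \Bigr],
\]
where $\langle\!\langle x\rangle\!\rangle_F$ denotes the normal closure of $x$; equivalently, and more usefully for first-order purposes, $F$ has the property iff for every $x\neq 1$ and every $y\neq 1$ there is $g\in F$ with $[x,y^g]\neq 1$ — i.e. no non-trivial element centralizes the normal closure of another non-trivial element. The reason this captures the full statement is that every non-trivial normal subgroup $N$ contains the normal closure of each of its non-trivial elements, and conversely each such normal closure is itself a non-trivial normal subgroup; hence $C_F(N)=1$ for all non-trivial normal $N$ iff $C_F(\langle\!\langle x\rangle\!\rangle)=1$ for all $x\neq 1$. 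In the finite (or more generally dcc) setting this is expressible with bounded quantifiers once one fixes the length of words in conjugates of $x$ needed to generate $\langle\!\langle x\rangle\!\rangle$; but I can sidestep even that by using the cleaner equivalent form ``$\forall x\neq 1\ \forall y\neq 1\ \exists g\ [x, y^g]\neq 1$'', which plainly is a single first-order sentence $\tau$ in the language of groups.

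Next I must verify $C_G(\alpha)\models\tau$. Suppose not: then there exist $x\neq 1$, $y\neq 1$ in $C_G(\alpha)$ with $[x,y^g]=1$ for all $g\in C_G(\alpha)$, i.e. $x$ centralizes $\langle y^g : g\in C_G(\alpha)\rangle = \langle\!\langle y\rangle\!\rangle_{C_G(\alpha)}$. The normal closure $\langle\!\langle y\rangle\!\rangle_{C_G(\alpha)}$ is a non-trivial \emph{normal} (in particular subnormal) subgroup of $C_G(\alpha)$, so by Lemma~\ref{dense}(b) its centralizer in $G$ — hence a fortiori in $C_G(\alpha)$ — is trivial, forcing $x=1$, a contradiction. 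Therefore $C_G(\alpha)\models\tau$.

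Finally, apply Łoś's theorem: since $C_G(\alpha)\equiv\prod_{i\in I}F_i/\mathcal{U}$ and $C_G(\alpha)\models\tau$, the set $\{\,i\in I : F_i\models\tau\,\}$ lies in $\mathcal{U}$, and by Remark~\ref{wlog} we may discard the complementary index set and assume $F_i\models\tau$ for all $i\in I$. By the equivalence established in the second paragraph, each such $F_i$ is a finite group in which the centralizer of every non-trivial normal subgroup is trivial, and $C_G(\alpha)\equiv\prod_{i\in I}F_i/\mathcal{U}$. This is exactly the assertion of the lemma. The only genuinely delicate point is the first-order reformulation of the ``all normal subgroups'' condition; once that is packaged as the sentence $\tau$, the rest is a routine application of Lemma~\ref{dense}(b) and Łoś's theorem.
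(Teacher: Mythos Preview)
Your proof is correct and essentially identical to the paper's: the paper introduces the same first-order sentence (written there as $\mu:\ \forall x\forall y\,[(x\neq1\wedge y\neq1)\rightarrow\exists z\,[y,x^z]\neq1]$), observes it is equivalent to the centralizer-triviality condition, verifies it in $C_G(\alpha)$ via Lemma~\ref{dense}(b), and then applies \L o\'{s}'s Theorem with Remark~\ref{wlog}. Your opening reference to Lemma~\ref{Lemma1}(b) is unnecessary (that lemma concerns definably simple groups, not the present situation), but the actual argument you give does not rely on it.
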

\begin{proof}
 As a pseudofinite group, \ $C_G(\alpha) \equiv \prod_{i\in I} G_i/\mathcal{U}$ \ where $G_i$ is a finite group for all $i\in I$ and $\mathcal{U}$ is a non-principal ultrafilter on a set $I$.  Let $\mu$ be the following sentence:
\[  \forall x \forall y \  \left[(x \neq 1 \wedge y \neq 1)  \rightarrow \exists z \ [y, x^z] \neq 1 \right].\] It can be observed that $\mu$ holds in a group $X$ if and only if the centralizer of any non-trivial normal subgroup of $X$ is trivial. By Lemma~\ref{dense}$(b)$,  $\mu$ holds in $C_G(\alpha)$. Therefore, we have $\prod_{i\in I} G_i/\mathcal{U} \models \mu$ and the lemma follows by \los \ and Remark~\ref{wlog}.
\end{proof}
\begin{remark}
From now on, we fix a family of finite groups \mbox{$\{ G_i ~|~ i\in I \}$} and an ultrafilter $\mathcal{U}$ on $I$ such that $C_G(\alpha) \equiv \prod_{i\in I} G_i/\mathcal{U}$ and the centralizer of any non-trivial normal subgroup of $G_i$ is trivial for each $i\in I$. Moreover, whenever we say ``for almost all $i\in I$'' we mean ``for elements of a subset of $I$ belonging to $\mathcal{U}$''.
\end{remark}
\begin{lemma}\label{one}  One of the following statements holds:
\begin{enumerate}
  \item $C_G(\alpha)$ is isomorphic to a (twisted) Chevalley group over a pseudofinite field.
  \item For each $i\in I$, there is a unique minimal normal subgroup $M_i$ of $G_i$ which is necessarily a direct product of non-abelian conjugate simple groups.
\end{enumerate}
\end{lemma}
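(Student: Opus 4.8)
The plan is to split into two cases according to whether $\{i\in I : G_i\text{ is simple}\}$ belongs to $\mathcal{U}$. In the first case I will land in alternative (a); in the second I will land in alternative (b), which will in fact come out entirely of the centralizer condition satisfied by the $G_i$ (established in Lemma~\ref{fop}), with no further input. Before splitting, observe that a pseudofinite group is infinite, so $\prod_{i\in I}G_i/\mathcal{U}$ is infinite and hence $\{i : |G_i|=1\}\notin\mathcal{U}$; by Remark~\ref{wlog} I may therefore assume that every $G_i$ is a non-trivial finite group.

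Suppose first that $G_i$ is simple for almost all $i$. By Remark~\ref{wlog} I may assume every $G_i$ is simple, and then automatically non-abelian, since an abelian $G_i$ would be a non-trivial normal subgroup of itself with non-trivial centralizer, contrary to Lemma~\ref{fop}. Being a subgroup of $G$, the group $C_G(\alpha)$ has finite centralizer dimension, because $G$ does by Fact~\ref{cdim} and finite centralizer dimension passes to subgroups \cite{myas}. We are thus in exactly the situation reached at the start of the proof of Proposition~\ref{Theorem1}: $C_G(\alpha)\equiv\prod_{i\in I}G_i/\mathcal{U}$ with every $G_i$ a non-abelian finite simple group, and $C_G(\alpha)$ of finite centralizer dimension. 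I would then rerun the three-case analysis from that proof — sporadic $G_i$ force $C_G(\alpha)$ to be finite; alternating $G_i$ have centralizer dimension growing with the rank, so the ranks and then the orders are bounded, again forcing $C_G(\alpha)$ finite; and classical $G_i$, necessarily of bounded Lie rank, yield a (twisted) Chevalley group over a pseudofinite field by Fact~\ref{Fact1}, the elementary equivalence being upgraded to an isomorphism by \cite{ryten}. This gives alternative (a).

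Suppose now that $G_i$ is not simple for almost all $i$; by Remark~\ref{wlog} I may assume no $G_i$ is simple, although this reduction is not strictly needed below. Fix $i$. Since $G_i$ is a non-trivial finite group it has a minimal normal subgroup $M_i$. If $N$ were a minimal normal subgroup of $G_i$ distinct from $M_i$, then $N\cap M_i=1$ (distinct minimal normal subgroups meet trivially), so $[N,M_i]\leqslant N\cap M_i=1$ and hence $N\leqslant C_{G_i}(M_i)=1$ by Lemma~\ref{fop} — a contradiction. Thus $M_i$ is the unique minimal normal subgroup of $G_i$, i.e. $M_i=\soc(G_i)$. It cannot be abelian, for then $M_i\leqslant C_{G_i}(M_i)=1$; so $M_i$ is non-abelian. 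As a minimal normal subgroup of a finite group it is characteristically simple, hence a direct product $S_1\times\cdots\times S_{k_i}$ of pairwise isomorphic finite simple groups, which are non-abelian by the previous sentence; and since the $G_i$-conjugates of $S_1$ generate a non-trivial normal subgroup of $G_i$ contained in $M_i$, minimality forces that subgroup to be $M_i$, so $S_1,\ldots,S_{k_i}$ form a single $G_i$-conjugacy class. This is alternative (b).

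The hard part, if any, lies in the first case: I must make sure that the three-case analysis imported from the proof of Proposition~\ref{Theorem1} relies only on $C_G(\alpha)$ being elementarily equivalent to an ultraproduct of finite simple groups and having finite centralizer dimension, and not on definable simplicity. Inspecting that proof, definable simplicity is used there only to produce the presentation $\prod_{i\in I}G_i/\mathcal{U}$ with the $G_i$ finite simple — which here is our starting point — so the analysis transfers unchanged. The remainder is routine finite group theory.
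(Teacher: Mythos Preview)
Your proof is correct and follows the same two-case split as the paper, with essentially identical handling of case~(b). The one difference worth noting is in case~(a): rather than rerunning the three-case analysis from the proof of Proposition~\ref{Theorem1} and then checking that definable simplicity was not used after the first step, the paper simply observes that $C_G(\alpha)$ \emph{is} definably simple here---definable simplicity is a first-order scheme, each simple $G_i$ satisfies it, so by \L o\'{s} and elementary equivalence $C_G(\alpha)$ does too---and then applies Proposition~\ref{Theorem1} as a black box; this makes your final paragraph's self-scrutiny unnecessary.
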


\begin{proof}
$(a)$ If $G_i$ is simple for almost all $i\in I$, then $C_G(\alpha)$ is definably simple and hence isomorphic to a (twisted) Chevalley group over a pseudofinite field by Proposition~\ref{Theorem1}.

$(b)$ Assume that $G_i$ is not simple for almost all $i\in I$. If $G_i$ has two minimal normal subgroups $M_i$ and $N_i$, then  we have \ $M_i \cap N_i = 1$ \ and hence \  $[M_i , N_i] =1$. Therefore, $N_i$ centralizes $M_i$. However,  this contradicts to Lemma~\ref{fop} and hence, each $G_i$ has a unique non-abelian minimal normal subgroup $M_i$. Therefore, $\soc(G_i)=M_i$ for all $i\in I$.  Although it is well-known in finite group theory, we will recall the structure of such socles.  Let $S_{i}$ be a minimal normal subgroup of $M_i$. Since $M_i \trianglelefteqslant G_i$, the group $\left\langle S_{i}^x ~|~ x\in G_i \right\rangle$ is a normal subgroup of $G_i$ contained in $M_i$.  As $M_i$ is a minimal normal subgroup of $G_i$, we get $\left\langle S_{i}^x ~|~ x\in G_i \right\rangle = M_i$. Moreover,    $({S_i})^x$ is a  minimal normal subgroup of $M_i$ for each $x\in G_i$. Therefore,  the normal subgroup $({S_i})^x \cap \left\langle S_{i}^y ~|~ y\in G_i\setminus\{x\} \right\rangle$ of $M_i$ is either trivial or ${S_i}^x$ is contained in $\left\langle S_{i}^y ~|~ y\in G_i\setminus\{x\} \right\rangle$. In the latter case, we can eliminate $({S_i})^x$ from the product. As a result we can conclude that  \[\soc(G_i)=M_i = S_{i}^{g_{i0}} \times S_{i}^{g_{i1}} \times \ldots \times S_{i}^{g_{i{k_i}}},\] where $g_{ij}\in G_i$ for $0\leqslant j \leqslant k_i$. Note that if $S_i^{g_{ij}}$ has a proper non-trivial normal subgroup $N_i$ for some $0\leqslant j \leqslant k_i$, then $N_i$ is normalized by $M_i$. This is not possible as $S_i^{g_{ij}}$ is a minimal normal subgroup of $M_i$.
\end{proof}
In the following two lemmas, we observe that the structure of $\soc(G_i)$ can be simplified further.
\begin{definition}\label{generalization}
 For any group $H$ and integers $n\geqslant 1$, $k\geqslant 1$, let $\tau_{n,k}$ be the first order sentence which expresses the following statement. 
 \vspace{2mm}
For all  $\bar{x}_0, \bar{x}_1 \ldots, \bar{x}_n \in H^k\setminus\{(1,\ldots, 1)\}$ and for all $0 \leqslant i < j \leqslant n$, if  \[ [C_H^2(\bar{x}_i), C_H^2(\bar{x}_j)] = 1,\] then the product subgroup $C_H^2(\bar{x}_0)\ldots C_H^2(\bar{x}_n)$ is not normal in $H$.
\end{definition}

\begin{lemma}\label{sentence}
The sentence $\tau_{1,k}$  holds in $C_G(\alpha)$ for all $k\geqslant 1$.  	
\end{lemma}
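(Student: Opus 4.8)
The plan is to argue by contradiction. Suppose $\tau_{1,k}$ fails in $C_G(\alpha)$ for some $k\geqslant 1$. Then there are tuples $\bar{x}_0,\bar{x}_1\in C_G(\alpha)^k\setminus\{(1,\ldots,1)\}$ such that, writing $A=C^2_{C_G(\alpha)}(\bar{x}_0)$ and $B=C^2_{C_G(\alpha)}(\bar{x}_1)$ (all centralizers computed inside $C_G(\alpha)$), we have $[A,B]=1$ while $N:=AB$ is a normal subgroup of $C_G(\alpha)$. Since each coordinate of $\bar{x}_i$ lies in $C^2_{C_G(\alpha)}(\bar{x}_i)$ and $\bar{x}_i\neq(1,\ldots,1)$, both $A$ and $B$ are non-trivial.

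The key observation is that $A$ is subnormal in $C_G(\alpha)$. Indeed, as $[A,B]=1$, the set $N=AB=BA$ is a subgroup, and $A$ is normalized both by $A$ and by $B$, hence $A\trianglelefteqslant N$. Combining this with the assumption $N\trianglelefteqslant C_G(\alpha)$ exhibits the chain $A\trianglelefteqslant N\trianglelefteqslant C_G(\alpha)$, so $A$ is a non-trivial subnormal subgroup of $C_G(\alpha)$.

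Now I would invoke Lemma~\ref{dense}$(b)$ applied to $A$, which gives $C_G(A)=1$. On the other hand, $[A,B]=1$ says precisely that $B\leqslant C_G(A)$ (taking the centralizer in the larger group $G$ is only a weaker requirement), so $B=1$, contradicting the non-triviality of $B$. Hence no such pair $\bar{x}_0,\bar{x}_1$ exists, that is, $\tau_{1,k}$ holds in $C_G(\alpha)$ for every $k\geqslant 1$.

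One point to keep in mind is that the double centralizers appearing in $\tau_{1,k}$ are formed inside $C_G(\alpha)$ and need not be definable in $G$; this is harmless, since Lemma~\ref{dense} is available for \emph{every} non-trivial subnormal subgroup of $C_G(\alpha)$ (definable hulls exist by the descending chain condition on definable subgroups), not merely definable ones. For $n=1$ there is essentially no obstacle beyond spotting the subnormal chain above; the genuine difficulty in this family of sentences should surface only when passing from $\tau_{1,k}$ to $\tau_{n,k}$ with $n>1$, where an induction on the number of pairwise commuting factors will presumably be required.
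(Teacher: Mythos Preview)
Your proof is correct and follows essentially the same route as the paper: assume $\tau_{1,k}$ fails, observe that $C^2(\bar{x}_0)$ is a non-trivial subnormal subgroup of $C_G(\alpha)$ via the chain $C^2(\bar{x}_0)\trianglelefteqslant C^2(\bar{x}_0)C^2(\bar{x}_1)\trianglelefteqslant C_G(\alpha)$, and then contradict Lemma~\ref{dense}$(b)$ since the non-trivial group $C^2(\bar{x}_1)$ centralizes it. Your final remark is slightly off, though: no induction is needed for $\tau_{n,k}$ with $n>1$, since the very same two-step subnormal chain (one factor inside the full product) works verbatim, as the paper notes in Remark~\ref{holds}.
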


\begin{proof}	
 Assume that  $\tau_{1,k}$ does not hold in $C_G(\alpha)$ for some $k\geqslant 1$. This means that, there are non-trivial $k$-tuples $\bar{x}, \bar{y}$ of elements of $C_G(\alpha)$ such that \[  [C^2(\bar{x}), C^2(\bar{y})] =1 \ \text{and}\ C^2(\bar{x})C^2(\bar{y}) \trianglelefteqslant C_G(\alpha), \]
where $C^2(\ldots)$ denotes the double centralizer in $C_G(\alpha)$. It is clear that $C^2(\bar{x})$ and $C^2(\bar{y})$ are non-trivial since they contain the tuples $\bar{x}$ and $\bar{y}$ respectively. 
Therefore, $C^2(\bar{x})$ is a non-trivial subnormal subgroup of $C_G(\alpha)$ which is centralized by the non-trivial group $C^2(\bar{y})$. However, this contradicts to Lemma~\ref{dense}$(b)$.
\end{proof}

\begin{remark}\label{holds}
Note that the argument in the proof of Lemma~\ref{sentence} can be generalized to show that $\tau_{n,k}$ holds in $C_G(\alpha)$  for any $n \geqslant 1$, $k\geqslant 1$.
\end{remark}

\begin{lemma}\label{unique}
$\soc(G_i)$ is a non-abelian simple group for almost all $i\in I$.
\end{lemma}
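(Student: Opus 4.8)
The plan is to rule out the possibility that $\soc(G_i) = M_i$ is a direct product of $k_i + 1 \geqslant 2$ conjugate non-abelian simple factors for almost all $i \in I$, using the finite centralizer dimension of $C_G(\alpha)$ together with the sentences $\tau_{n,k}$ from Definition~\ref{generalization}, which hold in $C_G(\alpha)$ by Lemma~\ref{sentence} and Remark~\ref{holds}. By Lemma~\ref{one}, we may assume we are in case~(b); suppose toward a contradiction that the number $k_i + 1$ of simple factors of $\soc(G_i)$ is unbounded on a set in $\mathcal{U}$ — more precisely, for each fixed $n$, the set of $i$ with $k_i + 1 > n$ lies in $\mathcal{U}$. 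The idea is that when $M_i = S_i^{g_{i0}} \times \cdots \times S_i^{g_{ik_i}}$ has many factors, one can write down, inside $G_i$, many commuting non-trivial ``double-centralizer'' subgroups whose product is normal, contradicting $\tau_{n,k}$ for suitable $n$.

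The key steps, in order, are as follows. First I would observe that in a finite group, for a non-trivial tuple $\bar x$ whose entries all lie in a single simple direct factor $L = S_i^{g_{ij}}$ of $\soc(G_i)$, the double centralizer $C_{G_i}^2(\bar x)$ is contained in $C_{G_i}^2(L)$, and since $L$ is non-abelian simple with trivial centralizer inside itself, a short computation with the structure of $\soc(G_i) \trianglelefteqslant G_i$ and the fact that distinct factors commute shows $C_{G_i}^2(\bar x)$ lies in (in fact equals, for suitable $\bar x$) the factor $L$ together with the part of $C_{G_i}(\soc(G_i))$ it sees; the essential point is that choosing $\bar x_j$ inside the $j$-th factor yields subgroups $C_{G_i}^2(\bar x_j)$ that pairwise commute, because distinct factors of the socle commute. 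Second, I would note that the product $C_{G_i}^2(\bar x_0)\cdots C_{G_i}^2(\bar x_n)$ contains $\soc(G_i)$ up to the relevant factors and can be arranged to be normal in $G_i$ (the socle is normal, and the conjugation action of $G_i$ permutes the factors transitively, so by choosing the $\bar x_j$ to form a $G_i$-invariant family, e.g. running over a full orbit of factors, the product is $G_i$-invariant). Third, taking $k$ equal to the fixed bound on tuple length needed and $n$ just below $k_i$, this exhibits a violation of $\tau_{n,k}$ in $G_i$ for almost all $i$; by \los\ this contradicts the fact that $C_G(\alpha) \models \tau_{n,k}$. Hence $k_i + 1$ is bounded on a set in $\mathcal{U}$, say $k_i + 1 = m$ for almost all $i$ after shrinking $I$ (Remark~\ref{wlog}). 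Finally, to push $m$ down to $1$: if $m \geqslant 2$, the same argument with $n = 1$ and $\bar x_0, \bar x_1$ chosen in two distinct socle factors directly violates $\tau_{1,k}$ (Lemma~\ref{sentence}); so $m = 1$, i.e. $\soc(G_i)$ is a single non-abelian simple group for almost all $i$.

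The main obstacle I anticipate is the second step: verifying carefully that in the finite group $G_i$ one can choose the non-trivial tuples $\bar x_j$ of a fixed, $i$-independent length $k$ so that (i) each $C_{G_i}^2(\bar x_j)$ is non-trivial, (ii) the subgroups $C_{G_i}^2(\bar x_j)$ pairwise commute, and (iii) their product is genuinely normal in $G_i$. Point (iii) is delicate because $C_{G_i}^2(\bar x_j)$ need not itself be normalized by $G_i$ — only the family, taken as a whole over a $G_i$-orbit of socle factors, is permuted by $G_i$, so one must select the orbit structure (and the tuples within each factor, compatibly under conjugation) with care; this is exactly the point where transitivity of the $G_i$-action on the factors $\{S_i^{g_{ij}}\}$ and the fact (established in the proof of Lemma~\ref{one}) that each factor is simple with no proper non-trivial $M_i$-invariant subgroup are used. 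Controlling the tuple length $k$ uniformly in $i$ is needed so that a single sentence $\tau_{n,k}$ does the job via \los; bounding the number of generators of a double centralizer in a finite simple group (or of the relevant subgroups) uniformly is where one invokes finiteness of centralizer dimension of $C_G(\alpha)$, transferred back to almost all $G_i$ by \los.
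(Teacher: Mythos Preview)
Your proposal identifies the right ingredients --- the double-centralizer subgroups attached to the simple factors of $\soc(G_i)$, their pairwise commutation, the sentences $\tau_{n,k}$, and the use of finite centralizer dimension to bound the tuple length $k$ --- but there is a genuine gap in how you bound the number of socle factors. You argue by contradiction that if $k_i+1$ is unbounded then ``taking $n$ just below $k_i$'' violates $\tau_{n,k}$; but $n$ then depends on $i$, so this is not a single first-order sentence and \los\ does not apply. If instead you fix $n$ and try to use only $n+1$ of the $k_i+1$ factors, the product $C^2(\bar x_0)\cdots C^2(\bar x_n)$ is \emph{not} normal in $G_i$, precisely because $G_i$ acts transitively on all $k_i+1$ factors. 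The same problem recurs in your final step: when $m\geqslant 3$, choosing only two tuples $\bar x_0,\bar x_1$ in two distinct factors does not violate $\tau_{1,k}$, since the product of two of the $C^2$'s is again not normal.

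The paper closes this gap with an additional idea (its Claim~2) that you are missing: one uses the centralizer dimension $d$ of the $G_i$ not only to bound the tuple length but also to bound the number of factors needed to write the \emph{full} product $C^2(S_i^{x_0})\cdots C^2(S_i^{x_{m_i}})$. Concretely, if some $C^2(S_i^{x_j})$ is not already contained in the product of the previous ones, one exhibits a strict drop in a centralizer chain built from non-central elements $c_{i_j}\in C^2(S_i^{x_j})$; since such a chain has length at most $d$, at most $d$ of the factors are needed. This gives a uniform bound $m+1\leqslant d$, independent of $i$, after which a single sentence $\tau_{m,d}$ (with $m\geqslant 1$) fails in every $G_i$ and the contradiction with Remark~\ref{holds} goes through. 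The paper then needs one further short argument (from $m=0$) to conclude that the conjugates $S_i^x$ actually coincide.
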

\begin{proof} We continue using the same notation as in  Lemma~\ref{one}$(b)$, that is,  $S_i$ denotes a simple minimal normal subgroup of $\soc(G_i)$.  Let us work with a fixed $i\in I$ and consider the set $\left\{S_i^x ~|~ x\in G_i \right\}$. By eliminating the repeating conjugates of $S_i$ we get \[\left\{S_{i}^x ~|~ x\in G_i \right\}= \left\{S_i, S_i^{x_1}\ldots, S_i^{x_{m_i}}\right\}\] for some $x_1, \ldots, x_{m_i} \in G_i$. We will show that $S_i$ is the unique element of this set and hence $\soc(G_i)=S_i$.

 Note that all the centralizers mentioned below are taken in $G_i$ and $x_0=1$.
\vspace{2mm}

\noindent
{\bf Claim 1.} For any $0 \leqslant j < k \leqslant m_i$, we have $[C^2(S_i^{x_j}), C^2(S_i^{x_k})]=1$. Moreover, the product $C^2(S_i^{x_0})C^2(S_i^{x_1}) \ldots  C^2(S_i^{x_{m_i}})$ is a normal subgroup of $G_i$.
\vspace{2mm}
\noindent

The elements of any distinct pair of conjugates of $S_i$ commutes pairwise because $[S_i^{x_j}, S_i^{x_k}]\leqslant S_i^{x_j}\cap S_i^{x_k}=1$ by minimality of conjugates of $S_i$ in $M_i$.  Therefore, $S_i^{x_j}\leqslant C(S_i^{x_k})$ and so we have $C^2(S_i^{x_j}) \leqslant C^3(S_i^{x_k}).$ This means that $C^2(S_i^{x_j})$ centralizes $C^2(S_i^{x_k})$ and the first part of the claim follows. Since the components of the product commute pairwise and $G_i$ permutes them by conjugation, normality of the product follows.
\vspace{2mm}

\vspace{2mm}
Now, $\prod_{i\in I} G_i/\mathcal{U}$ has finite centralizer dimension, let us say $d$, since $C_G(\alpha)$ has this first order property. Therefore, without loss of generality, we may assume $\cdim(G_i)=d$ for all $i\in I$, by \los \ and Remark~\ref{wlog}.

\vspace{2mm}
\noindent
{\bf Claim 2.} For all $i\in I$, the group $C^2(S_i^{x_0})C^2(S_i^{x_1}) \ldots  C^2(S_i^{x_{m_i}})$ can be written as the product of at most $d$ factors.

\vspace{2mm}
\noindent

Let us fix $i\in I$. Since $S_i^{x_j}$ is a non-abelian group contained in  $C^2(S_i^{x_j})$, for each $0 \leqslant j \leqslant m_i$, we can choose an element  $c_{i_j}\in C^2(S_i^{x_j})\backslash  Z(C^2(S_i^{x_j}))$.  Let us consider the following chain of subgroups of $G_i$
 \[ C_{G_i}(c_{i_0}) \geqslant C_{G_i}(c_{i_0}, c_{i_1}) \geqslant \cdots \geqslant C_{G_i}(c_{i_0}, c_{i_1}, \ldots, c_{i_{m_i}}). \]  
  Now, if 
 \begin{equation*}
 C^2(S_i^{x_j})\nsubseteq C^2(S_i^{x_0})C^2(S_i^{x_1}) \cdots  C^2(S_i^{x_{j-1}}) \tag{$*$}
\end{equation*}
for some  $1 \leqslant j \leqslant m_i$, then
 $$C_{G_i}(c_{i_0}, c_{i_1}, \ldots, c_{i_{j-1}}) \supsetneqq C_{G_i}(c_{i_0}, c_{i_1}, \ldots, c_{i_{j-1}}, c_{i_j}).$$
This last inclusion is proper since $ C^2(S_i^{x_j}) \subseteq C_{G_i}(c_{i_0}, c_{i_1}, \ldots, c_{i_{j-1}})$ but there is an element in $C^2(S_i^{x_j})$ which does not commute with $c_{i_j}$. Since  the situation in $(*)$ can not occur more than $d$ times, the claim follows.

\vspace{2mm}
 By Claim~2, there are finitely many possibilities for the number  of factors of the product groups $C^2(S_i^{x_0})C^2(S_i^{x_1}) \ldots  C^2(S_i^{x_{m_i}})$. By Remark~\ref{wlog}, we can conclude without loss of generality that all of the product groups have the same number of factors, say $m+1$, where $m\geqslant 0$.

\vspace{2mm}
\noindent
{\bf Claim 3.} $m=0$. 
\vspace{2mm}
\noindent

Assume that $m\geqslant 1$ and $i\in I$. Since $\cdim(G_i)=d$, there are  $\bar{y}_0,\ldots, \bar{y}_{m}\in (G_i)^d$ such that $C^2(S_i^{x_j}) = C^2(\bar{y}_j)$ for all $0 \leqslant j \leqslant m$. By the previous claims and the definition of $m$, the sentence $\tau_{m,d}$ from Definition~\ref{generalization} does not hold in $G_i$ for all $i\in I$. Therefore, it does not hold in $C_G(\alpha)$ which contradicts to Remark~\ref{holds}.

\vspace{2mm}
Now, we are ready to show that $\soc(G_i)=S_i$ for all $i\in I$. Let us fix $i\in I$. By Claim~3, there is $x\in G_i$ such that for all $y\in G_i$ we have $C^2(S_i^{y}) \subseteq C^2(S_i^{x})$. If there is  $y\in G_i$ such that $S_i^x \neq S_i^y$, then by Claim~1, $C^2(S_i^{y})$ is abelian. This is not possible since  $C^2(S_i^{y})$ contains the non-abelian group $S_i^{y}$. Therefore, $S_i^x=S_i^y$ for all $y\in G_i$, that is $S_i \trianglelefteqslant G_i$. It follows that $\soc(G_i)=S_i$.
\end{proof}

In order to simplify the notation, any ultraproduct $\prod_{i\in I} X_i/\mathcal{U}$ will be denoted by $(X_i)_\mathcal{U}$ in the rest of the paper.

\begin{lemma}\label{definable}
The ultraproduct $(\soc(G_i))_{\mathcal{U}}$ is a definable normal subgroup of $(G_i)_{\mathcal{U}}$ which is isomorphic to a (twisted) Chevalley group over a pseudofinite field.
\end{lemma}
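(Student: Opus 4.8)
The statement has two parts: (i) $(\soc(G_i))_{\mathcal U}$ is a definable normal subgroup of $(G_i)_{\mathcal U}$, and (ii) it is isomorphic to a (twisted) Chevalley group over a pseudofinite field. For (ii), once definability and normality are in place, the subgroup $(\soc(G_i))_{\mathcal U}$ inherits finite centralizer dimension from $(G_i)_{\mathcal U}$ (finite centralizer dimension passes to subgroups), and by \los\ it is elementarily equivalent to $(\soc(G_i))_{\mathcal U}$ where each $\soc(G_i)$ is, by Lemma~\ref{unique}, a non-abelian finite simple group for almost all $i\in I$. Hence $(\soc(G_i))_{\mathcal U}$ is a non-abelian definably simple pseudofinite group of finite centralizer dimension — actually, being an ultraproduct of finite simple groups, one can run the case analysis of Proposition~\ref{Theorem1} directly — and Proposition~\ref{Theorem1} gives the isomorphism with a (twisted) Chevalley group over a pseudofinite field.

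So the real work is part (i). The plan is to produce a single first-order formula $\psi(x)$, in the language of groups, that defines $\soc(G_i)$ uniformly across almost all $i\in I$; then by \los\ the same $\psi$ defines $(\soc(G_i))_{\mathcal U}$ in $(G_i)_{\mathcal U}$, and since ``$\psi$ defines a normal subgroup'' is itself first-order and holds in almost every $G_i$, normality transfers too. To get such a $\psi$, I would exploit the structure extracted in the previous lemmas: by Lemma~\ref{unique}, $\soc(G_i)=S_i$ is the unique minimal normal subgroup of $G_i$ and it is non-abelian simple, and by the proof of Lemma~\ref{unique} (Claim~3 with $m=0$) there is $x\in G_i$ with $C^2_{G_i}(S_i^{y})\subseteq C^2_{G_i}(S_i^{x})$ for all $y$, which together with $S_i\trianglelefteqslant G_i$ forces $\soc(G_i)=S_i=C^2_{G_i}(S_i)$ — i.e. the socle is a double centralizer. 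More usefully, since $\soc(G_i)$ is the unique minimal normal subgroup, it equals $C^2_{G_i}(a)$ for a suitable single element $a$ (any non-central $a\in\soc(G_i)$ with $\soc(G_i)$ minimal normal gives $\langle a^{G_i}\rangle=\soc(G_i)\subseteq C^2_{G_i}(a)$, and one checks the reverse inclusion using simplicity of the socle and triviality of centralizers of normal subgroups from Lemma~\ref{fop}). One then writes $\psi(x)$ as: ``there exists $a$ such that $C^2(a)$ is a nontrivial normal subgroup, $C^2(a)$ is a minimal such, and $x\in C^2(a)$'' — minimality among normal subgroups of a fixed bounded centralizer-complexity is expressible in first order using finite centralizer dimension $d$, exactly as in the proof of Lemma~\ref{unique}, because every definable-by-centralizers normal subgroup here is of the form $C^2(\bar y)$ with $\bar y\in G_i^{d}$.

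The step I expect to be the main obstacle is pinning down a \emph{uniform} first-order description of $\soc(G_i)$ valid for almost all $i$ — in particular, expressing ``minimal normal subgroup'' in a bounded way. The subtlety is that a priori normal subgroups of $G_i$ need not be definable by centralizers; the escape, already used in Lemma~\ref{unique}, is that finite centralizer dimension $d$ (which we may assume constant on $\mathcal U$ by \los\ and Remark~\ref{wlog}) lets us quantify over tuples $\bar y\in G_i^{d}$ and consider only subgroups of the form $C^2(\bar y)$, and $\soc(G_i)$ is among these. One must check that $\soc(G_i)$ really is the \emph{unique} minimal member among the normal subgroups of the form $C^2(\bar y)$ — which follows because any nontrivial normal subgroup of $G_i$ contains $\soc(G_i)=S_i$ (uniqueness of the minimal normal subgroup, Lemma~\ref{one}(b)/Lemma~\ref{unique}), and $C^2(\bar y)\supseteq\langle\bar y\rangle^{G_i}$ when $C^2(\bar y)\trianglelefteqslant G_i$ is itself... here one just needs that $C^2(\bar y)$, if nontrivial and normal, contains the socle, which is immediate. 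Then $\psi(x):\ \exists \bar y\,[\,C^2(\bar y)\neq 1 \wedge C^2(\bar y)\trianglelefteqslant G_i \wedge \forall\bar z\,(C^2(\bar z)\neq 1\wedge C^2(\bar z)\trianglelefteqslant G_i \rightarrow C^2(\bar y)\subseteq C^2(\bar z)) \wedge x\in C^2(\bar y)\,]$ is a legitimate first-order formula, it defines $\soc(G_i)$ in almost every $G_i$, and Remark~\ref{wlog} plus \los\ finish part (i). With (i) done, (ii) is Proposition~\ref{Theorem1} applied to $(\soc(G_i))_{\mathcal U}$ as explained above.
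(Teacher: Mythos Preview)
Your approach to part~(ii) is fine and matches the paper's: once you know $(\soc(G_i))_{\mathcal U}$ is an infinite ultraproduct of non-abelian finite simple groups sitting inside a group of finite centralizer dimension, Proposition~\ref{Theorem1} applies. (One small omission: you should say why $(\soc(G_i))_{\mathcal U}$ is infinite; the paper does this via Corollary~\ref{nofinite}.)

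The real problem is part~(i). Your uniform definition of $\soc(G_i)$ via double centralizers cannot work, and the reason is precisely Lemma~\ref{fop}. Suppose $N=C_{G_i}^2(\bar y)$ is a non-trivial normal subgroup of $G_i$. By the property fixed in Lemma~\ref{fop}, $C_{G_i}(N)=1$. But $C_{G_i}(N)=C_{G_i}^3(\bar y)=C_{G_i}(\bar y)$, so $C_{G_i}(\bar y)=1$ and hence $N=C_{G_i}^2(\bar y)=C_{G_i}(1)=G_i$. In other words, the \emph{only} non-trivial normal subgroup of $G_i$ of the form $C^2(\bar y)$ is $G_i$ itself, so your formula $\psi$ defines $G_i$, not $\soc(G_i)$. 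The same computation kills the earlier claim that $\soc(G_i)=C_{G_i}^2(S_i)$: since $S_i\trianglelefteqslant G_i$ is non-trivial, $C_{G_i}(S_i)=1$ and $C_{G_i}^2(S_i)=G_i$. (Concretely, in $G_i=S_5$ with $\soc(G_i)=A_5$ and $a=(1\,2\,3)$, one has $(4\,5)\in C_{G_i}^2(a)\setminus A_5$.) The appeal to Claim~3 of Lemma~\ref{unique} does not help: that claim only says the product of the $C^2(S_i^{x_j})$ collapses to a single factor, not that this factor equals $S_i$.

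The paper obtains definability by an entirely different device: it first identifies $(\soc(G_i))_{\mathcal U}$ as a Chevalley group, and then invokes Thompson's conjecture (proved by Ellers--Gordeev for groups of Lie type over fields with more than $8$ elements) to get, for almost all $i$, an element $r_i\in\soc(G_i)$ with $\soc(G_i)=r_i^{\soc(G_i)}r_i^{\soc(G_i)}$. Since $\soc(G_i)\trianglelefteqslant G_i$, this equals $r_i^{G_i}r_i^{G_i}$, which is a uniform one-parameter definition; \los\ then gives definability in $(G_i)_{\mathcal U}$. You will need some ingredient of this strength---double centralizers alone cannot isolate the socle here.
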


\begin{proof}
 Let us denote $\soc(G_i)$ by $M_i$ for each $i\in I$. By  Lemma~\ref{unique}, $M_i$ is a  non-abelian finite simple group for almost all $i\in I$. The ultraproduct $(M_i)_{\mathcal{U}}$ can not be finite because this would yield a finite normal subgroup of $C_G(\alpha)$ which is not possible by Corollary~\ref{nofinite}. Therefore, $(M_i)_{\mathcal{U}}$ is a pseudofinite group. Since $(M_i)_{\mathcal{U}}$ has finite centralizer dimension (inherited from $(G_i)_\mathcal{U}$), by Proposition~\ref{Theorem1}, we get that  $(M_i)_\mathcal{U}$ is isomorphic to an ultraproduct of  (twisted) Chevalley groups over finite fields.  In order to prove the definability of $(M_i)_{\mathcal{U}}$, we need  Thompson's Conjecture (see Remark~\ref{thompson}). Erich W. Ellers and Nikolai Gordeev  verified Thompson's Conjecture for finite simple groups of Lie type over  fields with more than $8$ elements \cite{EG}. Thompson's conjecture is applicable in our context since almost all of the fields in the ultraproduct have more than $8$ elements. It follows that, for almost all $i\in I$, there is  $r_i\in M_i$ such that $M_i = r_i^{M_i} r_i^{M_i}$.   Since $r_i\in M_i$ and $M_i \trianglelefteqslant G_i$, it is clear that  $r_i^{M_i}\subseteq r_i^{G_i}\subseteq M_i$. Therefore, $M_i=  r_i^{G_i} r_i^{G_i}$ and  $M_i$ is uniformly definable in $G_i$. It follows that $(M_i)_{\mathcal{U}}$ is definable in $(G_i)_\mathcal{U}$ as $(r_i)_\mathcal{U}^{(G_i)_\mathcal{U}} (r_i)_\mathcal{U}^{(G_i)_\mathcal{U}}$.
\end{proof}

\begin{proof}[Proof of Theorem~\ref{Theorem2}]
By Lemma~\ref{definable}, $(\soc(G_i))_\mathcal{U}$ is a definable normal subgroup of $(G_i)_\mathcal{U}$ which is isomorphic to a (twisted) Chevalley group over a pseudofinite field. Since $C_G(\alpha)$ is elementarily equivalent to $(G_i)_\mathcal{U}$, there is  a definable normal subgroup $S$ of $C_G(\alpha)$ which is elementarily equivalent to a (twisted) Chevalley group over a pseudofinite field. By \cite{ryten}, $S$ is isomorphic to a  (twisted) Chevalley group over a pseudofinite field. Moreover, since \[ S \trianglelefteqslant  C_G(\alpha) \ \ \text{and} \ \ C_{C_G(\alpha)}(S) =1,\] $C_G(\alpha)$ embeds in  $\aut(S)$ and the theorem follows.
\end{proof}
\vspace{2mm}

\noindent
{\it Acknowledgement}. \  This paper is a part of my Ph.D. thesis written under the supervision of Ay\c{s}e Berkman and Alexandre Borovik. I am very grateful
to my (co)supervisors for their suggestions and help. This work is supported by The Scientific and Technological Research Council of Turkey (T\"{U}B\.ITAK) and partially by MATHLOGAPS (MEST-CT-2004-504029).

\bibliographystyle{aabbrv}
\bibliography{pinar}

\end{document}